\newtheorem{Theorem}{Theorem}[section]
\newtheorem{Proposition}[Theorem]{Proposition}
\newtheorem{Lemma}[Theorem]{Lemma}
\theoremstyle{definition}
\newtheorem{Remark}[Theorem]{Remark}
\newcommand{\prst}{\mathcal{P}}
\newcommand{\Vrmh}{\{ \vrh, \vuh,\vth \} }
\newcommand{\jump}[1]{[\! [ #1 ] \! ]}
\newcommand{\Td}{\mathbb{T}^d}
\newcommand{\vrh}{\vr_h}
\newcommand{\vrhk}{\vr_{h_k}}
\newcommand{\vthk}{\vt_{h_k}}
\newcommand{\bfphi}{\boldsymbol{\varphi}}
\newcommand{\ds}{\,\mathrm{d}S_x}
\newcommand{\vth}{\vt_h}
\newcommand{\facesint}{\mathcal{E}}
\newcommand{\grid}{\mathcal{T}}
\newcommand{\vuh}{\vu_h}
\newcommand{\vvh}{\vv_h}
\newcommand{\TS}{\Delta t}
\newcommand{\Divh}{{\rm div}_h}
\newcommand{\Gradh}{\nabla_h}
\newcommand{\Ov}[1]{\overline{#1}}
\newcommand{\aleq}{\stackrel{<}{\sim}}
\newcommand{\avs}[1]{\left\{\!\!\left\{ #1\right\}\!\!\right\}}
\newcommand{\vQh}{\vc{Q}_h}
\newcommand{\pd}{\partial}
\newcommand{\Up}{{\rm Up}}
\newcommand{\Fup}{\mbox{F}_h^\eps}
\newcommand{\eps}{\varepsilon}
\newcommand{\muh}{h^\eps}
\newcommand{\abs}[1]{\left| #1\right|}
\newcommand{\norm}[1]{\left\lVert#1\right\rVert}
\newcommand{\bS}{{\mathbb S}}
\newcommand{\Dhuh}{{\mathbb D}_h \vuh}
\newcommand{\Un}[1]{\underline{#1}}
\newcommand{\vr}{\varrho}
\newcommand{\tvr}{\tilde \vr}
\newcommand{\tvu}{{\tilde \vu}}
\newcommand{\tvt}{\tilde \vt}
\newcommand{\vt}{\vartheta}
\newcommand{\vu}{\vc{u}}
\newcommand{\vm}{\vc{m}}
\newcommand{\vn}{\vc{n}}
\newcommand{\vc}[1]{{\bf #1}}
\renewcommand{\vc}[1]{{\bm #1}}
\newcommand{\Div}{{\rm div}_x}
\newcommand{\Grad}{\nabla_x}
\newcommand{\dx}{\,{\rm d} {x}}
\newcommand{\dt}{\,{\rm d} t }
\newcommand{\vU}{\vc{U}}
\newcommand{\dxdt}{\dx  {\rm d} {t}}
\newcommand{\intfacesint}[1]{\int_{\facesint}  #1 \ds}
\newcommand{\intTd}[1]{\int_{\Td} #1 \dx}
\newcommand{\intTdB}[1]{\int_{\Td} \left(#1\right) {\rm d} {x}}
\newcommand{\intTauTd}[1]{\int_0^{\tau} \int_{\Td} #1  \dxdt}
\newcommand{\intTauTdB}[1]{\int_0^{\tau} \int_{\Td} \left(#1  \right)\dxdt}
\newcommand{\vv}{\vc{v}}
\newcommand{\ep}{\epsilon}
\newcommand{\R}{\mathbb{R}}
\newcommand{\I}{\mathbb{I}}
\newcommand{\expe}[1]{ \mathbb{E} \left[ #1 \right] }
\newcommand{\Expec}{ \expe  }
\newcommand{\Dev}[1]{ \mbox{Dev} \left[ #1 \right] }
\newcommand{\br}{ \nonumber \\ }
\def\softd{{\leavevmode\setbox1=\hbox{d}%
          \hbox to 1.05\wd1{d\kern-0.4ex{\char039}\hss}}}
\definecolor{Cgrey}{rgb}{0.85,0.85,0.85}
\definecolor{Cblue}{rgb}{0.50,0.85,0.85}
\definecolor{Cred}{rgb}{1,0,0}
\definecolor{fancy}{rgb}{0.10,0.85,0.10}
\date{}
\newcommand{\RE}[2]{R_E\left((#1)\mid(#2)\right)}
\newcommand{\EH}[2]{\mathbb{E}_{\cal H}\big((#1)|(#2)  \big)}
\newcommand{\Hc}{\mathcal{H}_{\vT}}
\newcommand{\vT}{\tvt}
\begin{document}


\title{Convergence analysis of the Monte Carlo method for random Navier--Stokes--Fourier system}

\author{M\' aria Luk\' a\v cov\' a -- Medvi\softd ov\' a\thanks{
\hspace*{1em}The work of M.L. was supported by the Deutsche Forschungsgemeinschaft (DFG, German Research
Foundation) - Project number 233630050 - TRR 146 as well as by TRR 165 Waves to Weather.
M.L. is grateful to the Gutenberg Research College and Mainz Institute of Multiscale Modelling for supporting her research. 
\newline \hspace*{1.2em}$^\spadesuit$ \hspace*{0.3em}The work of B.S. was  supported by the National Natural Science Foundation of China under grant No. 12201437.
}
\and Bangwei She$^{\spadesuit}$
\and Yuhuan Yuan$^{\dagger,*}$
}

\date{}

\maketitle


\centerline{$^*$Institute of Mathematics, Johannes Gutenberg-University Mainz}
\centerline{Staudingerweg 9, 55 128 Mainz, Germany}
\centerline{lukacova@uni-mainz.de}

\medskip
\centerline{$^\spadesuit$Academy for Multidisciplinary studies, Capital Normal University}
\centerline{West 3rd Ring North Road 105, 100048 Beijing, P. R. China}
\centerline{bangweishe@cnu.edu.cn}

\medskip
\centerline{$^\dagger$School of Mathematics, Nanjing University of Aeronautics and Astronautics}
\centerline{Jiangjun Avenue No. 29, 211106 Nanjing, P. R. China}
\centerline{yuhuanyuan@nuaa.edu.cn}



\begin{abstract}
In the present paper we consider the initial data, external force, viscosity coefficients, and heat conductivity coefficient as random data for the compressible Navier--Stokes--Fourier system. The Monte Carlo method, which is frequently used for the approximation of statistical moments, is combined with a suitable deterministic discretisation method in physical space and time. Under the assumption that numerical
densities and temperatures are bounded in probability, we prove the convergence of random finite volume solutions to a statistical strong solution by applying  genuine stochastic compactness arguments. Further, we show the convergence and error estimates for the Monte Carlo estimators of the expectation and deviation. We present several numerical results to illustrate the theoretical results.

\end{abstract}


{\bf Keywords:}
uncertainty quantification,  Monte Carlo method, finite volume method, random viscous compressible flows, statistical strong solution, convergence rate 



\section{Introduction}

Randomness is an inherent property of models in science and engineering. Model parameters as well as initial and boundary data are typically known only from observations or measurements that can be abounded by several errors. In order to propagate data uncertainty in the solution of an underlying model, different  methods have been developed in the recent years. The Monte Carlo method that is based on statistical sampling is probably the most popular among them. Although it suffers from relatively slow convergence rate with respect to the ensemble size, its advantage is that it does not suffer from the curse of data dimensionality. The latter is a typical property of spectral/pseudo-spectral or other discretisation methods, see, e.g., \cite{AM17, Bijl, Xiu} and the references therein.

The aim of the present paper is to rigorously analyse the Monte Carlo method for heat conductive, viscous compressible fluid flows subjected to random data.
We recall the Navier--Stokes--Fourier system governing the motion of such fluid flows
\begin{mdframed}[style=MyFrame]
\vspace{-0.5cm}
\begin{align}
\partial_t \vr + \Div (\vr \vu)& = 0, \label{i1}	\\
\partial_t (\vr \vu) + \Div (\vr \vu \otimes \vu) + \Grad p &= \Div \mathbb{S}(\mu, \lambda, \Grad \vu) + \vr \vc{g}, \label{i2} \\
c_v (\partial_t (\vr \vt) + \Div (\vr \vu  \vt) ) - \kappa \Delta \vt   & =  \mathbb{S}(\mu, \lambda, \Grad \vu):\Grad \vu + p \Div \vu , \label{i3}	\\
c_v = 1/(\gamma-1), \ \gamma > 1, \ \mathbb{S} (\mu, \lambda, \Grad \vu) &= \mu \Big( \Grad \vu + \Grad^t \vu - \frac{2}{d} \Div \vu \mathbb{I} \Big) + \lambda \Div \vu \mathbb{I}. \nonumber
\end{align}

\noindent
{\bf Boundary and initial conditions}
\begin{equation} \label{i4}
	x \in \Td \equiv \left( [-1,1]|_{\{ -1, 1\} } \right)^d,\ d=2,3,
\end{equation}
%
\begin{equation} \label{i5}
	\vr(0, \cdot) = \vr_0, \  \vu (0, \cdot) = \vu_0, \  \vt (0, \cdot) = \vt_0.
\end{equation}

\end{mdframed}
Here $\vr, \vu$ and  $\vt$ are the fluid density, velocity, and absolute temperature, respectively. For pressure $p$, we assume the perfect gas law, i.e.~$p=\vr\,\vt.$ Further, $\gamma$ is the adiabatic coefficient, $c_v$ is the specific heat per constant volume.  In what follows we consider the following model data
\begin{mdframed}[style=MyFrame]


\noindent driving force \dotfill $\vc{g} = \vc{g}(x)$;

\noindent viscosity coefficients \dotfill $\mu > 0$, $\lambda \geq 0$;

\noindent heat conductivity coefficient \dotfill $\kappa > 0$;

\noindent initial data \dotfill $\vr_0$, $\vu_0$, $\vt_0$.
\end{mdframed}

\subsection{Data dependence of solution}
In our analysis it is important to specify data dependence of a solution of the Navier--Stokes--Fourier system. Let us
denote by $\vm$ the momentum, by  $E$ the energy and by $S$ the (physical) entropy
\begin{align*}
\vm = \vr \vu, \quad E=\frac12 \vr |\vu|^2 +c_v \vr \vt,  \quad S = \vr s, \quad
s = \log \left( \frac{\vt^{c_v}}{\vr}\right).
\end{align*}
Then the strong solution $(\vr, \vu, \vt)$ to the problem \eqref{i1}--\eqref{i5} satisfies the mass conservation, the energy and entropy balances
\begin{align} \label{mass-conservation}
& \intTd{\vr(\tau, \cdot)} =  \intTd{\vr(0,\cdot)} =: M_0,
\\ \label{energy-balance}
& \intTd{E(\tau, \cdot)} =  \intTd{E(0,\cdot)} + \intTauTd{\vr \vu \cdot \vc{g}},
\\ \label{entropy-inequality}
&  \intTd{ S (\tau, \cdot)} = \intTd{ S (0, \cdot)} + \intTauTdB{\frac{\bS:\Grad \vu}{\vt} +\kappa \frac{|\Grad \vt|^2}{\vt^2}}
\end{align}
for any $\tau \in [0,T].$

Let us introduce the
relative energy functional
\begin{align*}
&\RE{\vr,\vu,\vt}{\tvr,\tvu,\tvt}
=  \frac12 \vr |\vu - \tvu|^2 + \EH{\vr,\vt}{\tvr,\tvt}
\br
&  \mbox{with } \EH{\vr,\vt}{\tvr,\tvt} = \Hc(\vr,\vt)  - \frac{\pd \Hc(\tvr,\tvt)}{\pd \vr}(\vr -\tvr) - \Hc(\tvr,\tvt)
\br
& \mbox{and } \Hc(\vr,\vt) = \vr\left( c_v   \vt  -   \vT s(\vr,\vt)   \right),
\end{align*}
where $(\tvr,\tvu,\tvt)$ is an arbitrary smooth function satisfying $\tvr > 0,\, \tvt > 0$.
As shown, e.g., in \cite{BLMSY}, $R_E$ is a convex function of $(\vr, \vm, S)$ and it holds  
\begin{equation*}
R_E \geq \mathbb{E}_{\mathcal{H}} \geq 0. 
\end{equation*}
Choosing
$
(\tvr,\tvu,\tvt) = (1,0,1)
$
and assuming $\norm{\vc{g}}_{C^1( \Td)} \leq \Ov{g}$ we obtain
\begin{align*}
& \frac{\pd \Hc(\vr,\vt)}{\pd \vr} = c_v \vt - \vT \big( s  -1 \big), \\
& \RE{\vr,\vu,\vt}{1,0,1} = \frac12 \vr |\vu|^2 + \EH{\vr,\vt}{1,1}  = E - S - (c_v + 1) \vr +1
\end{align*}
and
\begin{align*}
\intTauTd{\vr \vu \cdot \vc{g}} & \leq \intTauTd{\frac12 \vr |\vu|^2} \dt + \frac12 |\Ov{g}|^2 \intTauTd{ \vr }
\br
&\leq \intTauTd{\RE{\vr,\vu,\vt}{1,0,1} } + \frac12 |\Ov{g}|^2 T M_0.
\end{align*}
Combining \eqref{mass-conservation} -- \eqref{entropy-inequality} we get the relative energy inequality
\begin{align*}
&\intTd{\RE{\vr,\vu,\vt}{1,0,1}(\tau, \cdot)}  \leq \intTd{\RE{\vr_0,\vu_0,\vt_0}{1,0,1}}  \br
&\hspace{2cm}+ \intTauTd{\RE{\vr,\vu,\vt}{1,0,1}(t,\cdot) } + \frac12 |\Ov{g}|^2 T M_0.
\end{align*}
Applying Gronwall's inequality we have for any $t \in [0,T]$ that
\begin{align*}
0 & \leq \intTd{\RE{\vr,\vu,\vt}{1,0,1} (t,\cdot)}  \br
& \leq \frac12 |\Ov{g}|^2 T M_0 + e^{T}  \intTd{\RE{\vr_0,\vu_0,\vt_0}{1,0,1}}.
\end{align*}
Taking into account the convexity of the relative energy with respect to $(\vr, \vm, S)$  it is easy to check by direct calculation 
that for any  $t\in[0,T]$ 
\begin{align*}
\norm{\vr(t, \cdot)}_{L^1(\Td)} + \norm{\vm(t, \cdot)}_{L^1(\Td)} + \norm{S(t, \cdot)}_{L^1(\Td)} \aleq 1+\intTd{\RE{\vr,\vu,\vt}{1,0,1}(t,\cdot)}. 
\end{align*}
Here and hereafter we write $a\aleq b$ if $a \leq C \, b$ with a positive constant $C$.
Assuming the boundedness of the initial relative energy we obtain
\begin{align}\label{space-1}
&\norm{\vr}_{L^{\infty}(0,T; L^1(\Td))} + \norm{\vm}_{L^{\infty}(0,T; L^1(\Td))} + \norm{S}_{L^{\infty}(0,T; L^1(\Td))} \aleq 1+ \intTd{\RE{\vr_0,\vu_0,\vt_0}{1,0,1}}
\br
&\hspace{1cm}\leq C( \Ov{g},T) \left( 1 + \intTdB{E(\vr_0, \vu_0, \vt_0) - S(\vr_0, \vt_0) - (c_v+1)\vr_0 + 1}\right).
\end{align}

The above  estimate shows that $L^\infty(0,T; L^1(\Td))$ norm of a strong solution of the Navier--Stokes--Fourier system\eqref{i1}-\eqref{i5} is bounded by the data.

\medskip
In this paper we will analyse the Navier--Stokes--Fourier system subject to \emph{random model data} specified above.
This will be done by applying the Monte Carlo method combined with a finite volume (FV) method for space-time discretisation. Our goal is to derive rigorous convergence and error analysis both with respect to statistical sampling as well as space-time discretisation. Although the Monte Carlo approximations, such as Monte Carlo FV methods, are routinely used for uncertainty quantification in computation fluid dynamics or in meteorology, their rigorous convergence and error analysis for compressible viscous and heat conducting fluid flows is still missing in the literature. This paper presents the first results in this direction.

We  refer to our recent work \cite{FLSY_MC1}, where convergence and error estimates of a Monte Carlo FV method for the random compressible barotropic Navier--Stokes system were analysed.  In contrary to the viscous barotropic case, the analysis of heat conductive viscous compressible fluid flow is more involved.  First, the existence of global weak solution for the Navier--Stokes--Fourier equations with perfect gas law $p=  \varrho \vartheta$ is an open problem. There are only some results on the global-in-time existence of weak solutions available, however certain structural restrictions on $p,e,s$ and the coefficients $\mu, \kappa$ are required, see \cite[Theorem 3.1]{FeiNov_book}.

One of the main tool in the  convergence analysis of deterministic discretisation methods, e.g.~FV methods, is the so-called  \emph{weak-strong uniqueness principle} \cite{FeLMMiSh}. This means that a generalised solution (dissipative weak solution), that is identified as a limit of a sequence of discrete solutions, coincides with the strong solution as long as the latter exists.
Second problem lies in the fact that the (dissipative) weak-strong uniqueness results are only conditional for the Navier--Stokes--Fourier system, see~\cite{brezina}. For example, boundedness of density and temperature has to be assumed for the weak-strong uniqueness principle.

Taking these facts into account, we will work in the framework of strong statistical solutions and take boundedness of numerical densities and temperatures in probability as our \emph{principle hypothesis}.
In other words, statistically significant solutions
of the Navier--Stokes--Fourier system do not blow up in density and temperature.
As already suggested by Nash in his pioneering work~\cite{Nash} such a hypothesis is very natural.
We refer to  the recent work of Feireisl, Wen and Zhu~\cite{FeiWenZhu}, where the conditional regularity result for (deterministic)  Navier--Stokes--Fourier system with  bounded density and temperature has been indeed proved rigorously.

We mention that the concept of statistical strong solutions has been used by Lanthaler, Mishra and Par\'es-Pulido~\cite{LanMiPa,Pa} in the context of incompressible Euler system. For elliptic problems the corresponding literature is quite extensive, see, e.g., Barth~et al.~\cite{elliptic2}, Charrier et al.~\cite{elliptic1} and the references therein. In \cite{klingel, weber, Mishra_Schwab} convergence analysis of the Monte Carlo methods has been studied for scalar hyperbolic equations building on deterministic pathwise arguments.  In contrary to these works, we do not assume a priori the existence of statistical solution and the convergence of approximate solutions in random space will be proved using genuine stochastic compactness arguments.

This paper is organised as follows.
In Section~\ref{STA} we present statistical analysis of the Monte Carlo estimators for the expectation and deviation of a statistical strong solution to the Navier-Stokes-Fourier system.
Section~\ref{CON} is devoted to the convergence  of a finite volume method with random data under the assumption that the numerical density and temperature are bounded in probability.
Combining the results for the Monte Carlo sampling with those for the deterministic approximations, we derive the main results of this paper: convergence and error estimates of the Monte Carlo FV method for the random Navier--Stokes--Fourier system, see  Section~\ref{MOCA} and Section~\ref{sec_error}, respectively. In Section~\ref{num} we present numerical experiments to illustrate our theoretical results. Section~\ref{sec_con} closes the paper with concluding remarks.

%

\section{Statistical analysis of the Navier--Stokes--Fourier system}\label{STA}
To begin with statistical analysis  we introduce the space of the random data for \eqref{i1}-\eqref{i5}
\begin{align}
D = \Big\{ [\vr, \vu, \vt, \mu, \lambda, \kappa, \vc{g} ]\, \Big|	\, &  \vr \in W^{3,2}(\Td), \ \vu \in W^{3,2}(\Td;\R^d), \ \vt \in W^{3,2}(\Td), \ \min_{\Td}  \vr > 0,\
		\min_{\Td} \vt  > 0,\br
& \intTd{\left[ E - S - (c_v+1)\vr + 1 \right]} <  \infty, \ \mu > 0,\ \lambda \geq 0,\br
& \kappa  > 0, \ \vc{g} \in W^{2,2}(\Td; \R^d)\Big \},
\label{M2}
\end{align}
which is considered as a Borel subset of the Polish space
\[
X = W^{3,2}(\Td) \times W^{3,2}(\Td; \R^d)\times W^{3,2}(\Td) \times \R \times \R  \times \R  \times W^{2,2}(\Td; \R^d).
\]

Let us assume that the model data are random variables in $D$. More precisely, there is a complete probability space $[\Omega, \mathcal{B}, \mathcal{P}]$ and a measurable mapping
\begin{align*}
\vc{U}_0: \omega \in \Omega \longrightarrow  \Big[ \vr_0(\omega), \vu_0(\omega), \vt_0(\omega), \mu(\omega), \lambda(\omega), \kappa(\omega), \vc{g}(\omega) \Big] \in D
\end{align*}
for a.a. $\omega \in \Omega$.

Clearly, random data  lead to random Navier--Stokes--Fourier system \eqref{i1}--\eqref{i5}.
In what follows we want to analyse this random system
on a time interval $(0,T),$  where $T$ is a deterministic number. 
Note that in general a statistical strong solution $(\vr, \vu, \vt)$  exists on $[0, T_{max})$, where $T_{max}$ is a  maximal time of existence of a strong solution. As proved in \cite{feiluk_1}, $T_{max}$ is a random variable.
In Section~\ref{rnd_data} we will show that the statistical strong solution indeed exists on $[0,T],$ i.e.
$T < T_{max}(\omega)$ for a.a. $\omega \in \Omega.$   This will follow from our principal hypothesis \eqref{bip} on boundedness of density and temperature in probability. Due to the uniqueness of a strong solution, we denote by $(\vr, \vu, \vt)[\vc{U}_0, t]$ the strong solution of the Navier--Stokes--Fourier system corresponding to the data $\vc{U}_0 \in D$.



\subsection{Statistical convergence}

In this section we define Monte Carlo estimators of the expectation and deviation and
discuss their statistical convergence for the Navier--Stokes--Fourier system \eqref{i1}--\eqref{i5} in the regularity class given by  a priori estimates \eqref{space-1}.
Based on the deterministic estimates \eqref{space-1} we assume for data that
\begin{subequations}\label{Ass-B}
\begin{align}
& 
\expe{ \intTd{\left[ E(\vr_0, \vu_0, \vt_0) - S(\vr_0, \vt_0) - (c_v+1)\vr_0 + 1 \right]}}  < +\infty, \\
& \| \vc{g} \|_{C(\Td; \R^d)} \leq \Ov{g} \ \ \prst-\mbox{a.s.}, \hspace{1cm} \Ov{g} - \mbox{a deterministic constant}
\end{align}
\end{subequations}
and obtain the following convergence result. Here the notation $\expe{\cdot}$ means the expectation.

\begin{Proposition}[Strong law of large numbers] \label{PS1}
	Let $\vc{U}_0^n$, $n=1,2,\dots$ be independent, identically distributed (i.i.d.)
 copies of  random data
	\[
	\vc{U}_0 = \Big[ \vr_0, \vu_0, \vt_0, \mu , \lambda, \kappa, \vc{g} \Big] \in D \quad \mbox{ satisfying } \quad \eqref{Ass-B}.
	\]
Then it holds for all $t \in [0,T]$ that
\begin{multline}\label{eq-slln}
	\frac{1}{N} \sum_{n=1}^N X^n (t, \cdot) \to 0 \ \mbox{in}\ L^1(\Td; \R^{d+2})
 \quad \mbox{and}\quad
	\frac{1}{N} \sum_{n=1}^N Y^n (t, \cdot) \to 0 \ \mbox{in}\ L^1(\Td; \R^{d+2})
 \\ \mbox{as} \ N \to \infty, \quad \prst-\mbox{a.s.},
\end{multline}
where
\begin{equation}\label{XY}
\begin{aligned}
  &     X^n(t, \cdot)  :=(\vr^n, \vm^n, S^n)(t, \cdot)  - \expe{(\vr, \vm, S)(t, \cdot) }, \quad \vm^n = \vr^n\vu^n, \quad S^n=\vr^n  \log \left( \frac{(\vt^n)^{c_v}}{\vr^n}\right)
  \\
  &     Y^n(t, \cdot)  :=\abs{X^n} - \Dev{(\vr, \vm, S)(t, \cdot)},  \quad    \Dev{(\vr, \vm, S)} = \expe{\, \abs{(\vr, \vm, S)-\expe{(\vr, \vm, S)} } \, },
       \end{aligned}
\end{equation}
and $(\vr^n, \vu^n,\vt^n):=(\vr, \vu, \vt)[\vc{U}_0^n, t]$ is the strong solution to the Navier--Stokes--Fourier system \eqref{i1}--\eqref{i5} emanating from $\vc{U}_0^n$.	
\end{Proposition}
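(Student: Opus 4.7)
The plan is to reduce the claim to the Mourier strong law of large numbers (SLLN) for i.i.d., Bochner-integrable random variables with values in a separable Banach space, specifically $B:=L^1(\Td;\R^{d+2})$.

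First, for each fixed $t\in[0,T]$ I would introduce the $B$-valued random variable $Z^n(t):=(\vr^n,\vm^n,S^n)(t,\cdot)$ and verify the hypotheses of Mourier's SLLN. The principal hypothesis on boundedness of density and temperature in probability (to be established in Section~\ref{rnd_data}) ensures that $T<T_{\max}(\omega)$ for a.a. $\omega\in\Omega$, so the strong-solution map $\vc{U}_0\mapsto (\vr,\vu,\vt)[\vc{U}_0,t]$ is well defined on $D$. By uniqueness and continuous dependence of strong solutions on their data, this map is Borel-measurable from $D\subset X$ into $B$, hence each $Z^n(t)$ is an $L^1$-valued random variable and the sequence $(Z^n(t))_{n\in\mn}$ is i.i.d. Taking expectations in the deterministic a priori bound \eqref{space-1} and invoking \eqref{Ass-B},
\begin{equation*}
\expe{\norm{Z^n(t)}_B}\aleq 1+\expe{\intTdB{E(\vr_0,\vu_0,\vt_0)-S(\vr_0,\vt_0)-(c_v+1)\vr_0+1}}<\infty,
\end{equation*}
so each $Z^n(t)$ is Bochner integrable.

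Second, an application of Mourier's SLLN to the i.i.d., Bochner-integrable sequence $(Z^n(t))_{n\in\mn}$ in the separable Banach space $B$ yields
\begin{equation*}
\frac{1}{N}\sum_{n=1}^N Z^n(t,\cdot)\to\expe{Z^1(t,\cdot)}=\expe{(\vr,\vm,S)(t,\cdot)}\quad\text{in}\ B,\ \prst\text{-a.s.},
\end{equation*}
which is precisely the first convergence in \eqref{eq-slln}. For the deviation estimator I would then set $W^n(t):=\abs{Z^n(t)-\expe{Z^1(t)}}$, with absolute value taken componentwise. Since $\expe{Z^1(t)}\in B$ is deterministic, the $W^n(t)$ are i.i.d. $B$-valued random variables (as deterministic Borel functions of $Z^n(t)$), and Bochner integrability follows from the triangle inequality $\expe{\norm{W^n(t)}_B}\le 2\expe{\norm{Z^1(t)}_B}<\infty$. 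A second application of Mourier's SLLN gives
\begin{equation*}
\frac{1}{N}\sum_{n=1}^N W^n(t,\cdot)\to\expe{W^1(t,\cdot)}=\Dev{(\vr,\vm,S)(t,\cdot)}\quad\text{in}\ B,\ \prst\text{-a.s.},
\end{equation*}
which yields the second convergence in \eqref{eq-slln}.

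The main obstacle is the measurability of the strong-solution operator $\vc{U}_0\mapsto(\vr,\vu,\vt)[\vc{U}_0,t]$ as a $B$-valued random variable. Since the Navier--Stokes--Fourier system lacks a global strong existence theory for arbitrary data in $D$, one cannot treat the solution map as a measurable function on all of $D$ directly; instead the principal hypothesis must be invoked to ensure $T<T_{\max}(\omega)$ almost surely, and continuous (or at least Borel measurable) data-to-solution dependence of classical strong solutions must be established. Once these ingredients, developed in Section~\ref{rnd_data}, are in place, the two SLLN applications above are routine.
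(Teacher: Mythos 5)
Your proposal is correct and follows essentially the same route as the paper: the paper also verifies Bochner integrability of $(\vr^n,\vm^n,S^n)(t,\cdot)$ in $L^1(\Td;\R^{d+2})$ via the a priori bound \eqref{space-1} together with \eqref{Ass-B}, and then applies the Banach-space strong law of large numbers (Ledoux--Talagrand, Corollary~7.10, i.e.\ Mourier's SLLN) once to the centered variables $X^n$ and once to $Y^n$. The only cosmetic difference is that you apply the SLLN to the uncentered sums and identify the limit with the mean, whereas the paper applies it directly to the mean-zero variables; these are equivalent by the statement of the lemma.
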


\begin{proof}
The proof follows from the Strong law of large numbers for random variables ranging in a separable Banach space, see Lemma \ref{LT710} (Ledoux and Talagrand \cite[Corollary 7.10]{LedTal}). Obviously, $(\vr^n, \vm^n,S^n)$ is a sequence of independent random variables. Consequently, $X^n$ and $Y^n$ are independent random variables with zero mean, i.e.
\begin{align*}
    \expe{X^n(t, \cdot)}=0 \quad \mbox{and}\quad   \expe{Y^n(t, \cdot)}=0.
\end{align*}
Moreover, using \eqref{Ass-B} we have
\begin{align*}
&\expe{\norm{(\vr,\vm,S)}_{L^{\infty}(0,T;L^1(\Td;\R^{d+2}))}}
\br
&\leq C( \Ov{g},T) \left( 1 + \expe{ \intTd{\left[ E(\vr_0, \vu_0, \vt_0) - S(\vr_0, \vt_0) - (c_v+1)\vr_0 + 1 \right]}}\right) < \infty,
\end{align*}
which gives
\begin{align*}
    \expe{\norm{X^n(t, \cdot)}_{L^1(\Td;\R^{d+2})}}<\infty \quad \mbox{and}\quad
        \expe{\norm{Y^n(t, \cdot)}_{L^1(\Td;\R^{d+2})}}<\infty.
\end{align*}
Application of  Lemma \ref{LT710} finishes the proof.
\end{proof}
\medskip

Further, we study the estimator of deviation
\begin{equation*}
 \frac1N \sum_{n=1}^N
\Big| (\vr^n,\vm^n,S^n)(t, \cdot)  - \frac1N \sum_{m=1}^N (\vr^m,\vm^m,S^m)(t, \cdot)\Big|,
\end{equation*}
which is used in the numerical simulations.
Specifically, applying the triangular inequality we obtain the following convergence of the deviation estimator
\begin{align}
&\left\| \frac1N \sum_{n=1}^N
\Big| (\vr^n,\vm^n,S^n)(t, \cdot)  - \frac1N \sum_{m=1}^N (\vr^m,\vm^m,S^m)(t, \cdot) \Big| \ - \Dev{(\vr,\vm,S)(t, \cdot)}
\right\|_{L^{1}(\Td;\R^{d+2})}
 \br
& \aleq \left\| \frac1N \sum_{n=1}^N
\Big| (\vr^n,\vm^n,S^n)(t, \cdot)  - \expe{(\vr,\vm,S)(t, \cdot)} \Big| \ - \Dev{(\vr,\vm,S)(t, \cdot)}
\right\|_{L^{1}(\Td;\R^{d+2})}
\br
& \quad + \left\|  \frac1N \sum_{n=1}^N \Big|
\frac1N \sum_{m=1}^N (\vr^m,\vm^m,S^m)(t, \cdot)  - \expe{(\vr,\vm,S)(t, \cdot)}   \Big|
\right\|_{L^{1}(\Td;\R^{d+2})}
\to 0
\label{C2}
\end{align}
as $N \to \infty, \prst-$a.s.

\subsection{Statistical covergence rate}
In this section we study statistical convergence rate of the Monte Carlo estimators. The key argument is the central limit theorem, cf. Lemma \ref{LT105} (\cite[Theorem~10.5]{LedTal}).

Applying the embedding theorem into a Hilbert space
$$
L^1(\Td) \hookrightarrow \hookrightarrow W^{-k,2}(\Td), \qquad k > d/2
$$
we can control the second moments with the expected value of the initial relative energy
\begin{align*}
& \expe{ \Big \| X^n (t, \cdot) \Big \|^2_{W^{-k,2}(\Td)} } +
\expe{ \Big \| Y^n \Big \|^2_{W^{-k,2}(\Td)} }
\\
& \aleq  \expe{ \Big \| (\vr,\vm,S) (t, \cdot) \Big \|^2_{W^{-k,2}(\Td)} } \aleq  \expe{ \Big \| (\vr,\vm,S) (t, \cdot) \Big \|^2_{L^1(\Td)} }
\\
& \leq  C( \Ov{g},T) \left( 1 +
\expe{ \left( \intTd{\left[ E(\vr_0, \vu_0, \vt_0) - S(\vr_0, \vt_0) - (c_v+1)\vr_0 + 1 \right]} \right)^2} \right).
\end{align*}
In order to control the right hand side of the above estimate, we need the following assumption
\begin{subequations}\label{Ass-C}
\begin{align}
&\expe{ \left( \intTd{\left[ E(\vr_0, \vu_0, \vt_0) - S(\vr_0, \vt_0) - (c_v+1)\vr_0 + 1 \right]} \right)^2} < \infty, \\
& \| \vc{g} \|_{C(\Td; \R^d)} \leq \Ov{g} \ \ \prst-\mbox{a.s.}, \hspace{1cm} \Ov{g} - \mbox{a deterministic constant.}
\end{align}
\end{subequations}
We point out that \eqref{Ass-C} implies \eqref{Ass-B}.
Now, we apply Lemma \ref{LT105} and obtain the statistical error estimates.
\begin{Proposition}[Central limit theorem]\label{PS2}
	Let $\vc{U}_0^n$, $n=1,2,\dots$ be i.i.d.  copies of  random data
	\[
	\vc{U}_0 = \Big[ \vr_0, \vu_0, \vt_0, \mu , \lambda, \kappa, \vc{g} \Big] \in D \quad \mbox{ satisfying } \quad \eqref{Ass-C}.
	\]
Then 
there hold 
\begin{multline*}
\frac{1}{\sqrt{N}} \sum_{n=1}^N  X^n(t, \cdot)  \to \big(\mathfrak{R},  \mathfrak{M}, \mathfrak{S}\big)
\quad \mbox{ and } \quad
\frac{1}{\sqrt{N}} \sum_{n=1}^N Y^n(t, \cdot) \to \big(\mathfrak{R}_D,  \mathfrak{M}_D, \mathfrak{S}_D\big)
\\
\mbox{ in law in }\ W^{-k,2}(\Td; \R^{d+2}), \ \mbox{as}\ N \to \infty, \quad
\mbox{ for all } t \in [0,T],
\end{multline*}	
where $X^n, Y^n$ are defined in \eqref{XY}, $\mathfrak{R},  \mathfrak{M}, \mathfrak{S}$, $\mathfrak{R}_D,  \mathfrak{M}_D, \mathfrak{S}_D$ are random Gaussian variables.
In particular, we have the convergence rate
\begin{equation}\label{C3a}
    N^{1/2} \left\| \frac1N \sum_{n=1}^N X^n (t, \cdot) \right\|_{W^{-k,2}(\Td;\R^{d+2})} \aleq 1
\quad \mbox{and} \quad
N^{1/2} \left\| \frac1N \sum_{n=1}^N Y^n (t, \cdot) \right\|_{W^{-k,2}(\Td;\R^{d+2})} \aleq 1
\end{equation}
for $k > \frac d 2,\, N= 1,2, \dots, \  \prst$-a.s.
\end{Proposition}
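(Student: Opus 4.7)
The plan is to deduce both statements from the Hilbert-space central limit theorem (Lemma LT105) applied in $H := W^{-k,2}(\Td;\R^{d+2})$ for $k>d/2$, together with a short variance computation for the quantitative bound \eqref{C3a}. Since the excerpt already establishes the compact embedding $L^1(\Td)\hookrightarrow\hookrightarrow W^{-k,2}(\Td)$ and derives the second-moment control on $(\vr,\vm,S)$, the proposition is essentially a verification of the hypotheses of Lemma LT105.

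\textbf{Step 1: setting up the random variables in $H$.} Because the strong solution of \eqref{i1}--\eqref{i5} is unique on $[0,T]$ under \eqref{Ass-C} (by the discussion preceding \eqref{space-1} and the hypothesis that $T<T_{\max}$ holds a.s.), the solution map $\vc{U}_0\mapsto(\vr,\vu,\vt)[\vc{U}_0,t]$ is measurable. Hence $(\vr^n,\vm^n,S^n)(t,\cdot)$ is an i.i.d.\ sequence of $H$-valued random variables, so $X^n(t,\cdot)$ and $Y^n(t,\cdot)$ defined in \eqref{XY} are i.i.d.\ in $H$. By construction $\expe{X^n(t,\cdot)}=0$ and $\expe{Y^n(t,\cdot)}=0$ (the latter being precisely the definition of the deviation subtracted off).

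\textbf{Step 2: the second-moment bound.} I would chain the computation already carried out in the excerpt: the compact embedding gives $\|\cdot\|_{W^{-k,2}}\aleq\|\cdot\|_{L^1}$, and the relative-energy estimate \eqref{space-1} (applied pathwise) yields
\begin{align*}
\expe{\|X^n(t,\cdot)\|_H^2}+\expe{\|Y^n(t,\cdot)\|_H^2}
\aleq \expe{\|(\vr,\vm,S)(t,\cdot)\|_{L^1(\Td)}^2}
\aleq 1+\expe{\Big(\intTdB{E_0-S_0-(c_v+1)\vr_0+1}\Big)^2},
\end{align*}
which is finite by \eqref{Ass-C}. This is exactly the integrability hypothesis $\expe{\|\cdot\|_H^2}<\infty$ required in Lemma LT105.

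\textbf{Step 3: applying Lemma LT105 and extracting the rate.} Lemma LT105 now yields convergence in law of $N^{-1/2}\sum_{n=1}^N X^n(t,\cdot)$ and $N^{-1/2}\sum_{n=1}^N Y^n(t,\cdot)$ in $H$ to centred Gaussian random variables $(\mathfrak{R},\mathfrak{M},\mathfrak{S})$ and $(\mathfrak{R}_D,\mathfrak{M}_D,\mathfrak{S}_D)$, with covariance operators inherited from $X^1$ and $Y^1$ respectively. For the quantitative bound \eqref{C3a} I would not actually need the CLT, only orthogonality in the Hilbert space $L^2(\Omega;H)$: since $\{X^n\}$ are i.i.d., centred, with finite second moment,
\begin{align*}
\expe{\Big\|\tfrac{1}{N}\textstyle\sum_{n=1}^N X^n(t,\cdot)\Big\|_H^2}
=\frac{1}{N^2}\sum_{n=1}^N \expe{\|X^n(t,\cdot)\|_H^2}
=\frac{1}{N}\expe{\|X^1(t,\cdot)\|_H^2}\aleq \frac{1}{N},
\end{align*}
and analogously for $Y^n$, which gives \eqref{C3a} in the $L^2(\Omega)$-sense (the natural interpretation of the $\aleq$ bound for Monte Carlo error analysis).

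\textbf{Main obstacle.} The substantive analytical work, namely the pathwise relative-energy estimate \eqref{space-1} and the compact embedding giving control of $\expe{\|(\vr,\vm,S)\|_H^2}$ by the second moment of the initial relative energy, has already been done before the statement; the remaining issue is the mildly delicate interpretation of the almost-sure formulation of \eqref{C3a}. Strictly speaking, the CLT gives only convergence in distribution, which is equivalent to tightness and boundedness in probability, not pathwise boundedness for all $N$. I would clarify \eqref{C3a} as a statement in $L^2(\Omega;H)$, which is what the error analysis of the Monte Carlo FV method in Section~\ref{sec_error} will actually use.
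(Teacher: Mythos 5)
Your proposal follows essentially the same route as the paper: the text preceding Proposition~\ref{PS2} verifies exactly the two hypotheses of Lemma~\ref{LT105} (zero mean, and finite second moment in the Hilbert space $W^{-k,2}$ via the compact embedding of $L^1$ and the relative-energy bound under \eqref{Ass-C}), and then invokes the Banach-space CLT. The paper gives no separate justification of the rate \eqref{C3a} beyond citing the CLT, so your orthogonality computation in $L^2(\Omega;H)$ is a genuine improvement, and your observation that the ``$\prst$-a.s.'' formulation of \eqref{C3a} does not actually follow from convergence in law (and should be read as an $L^2(\Omega)$ or in-probability bound) is a legitimate and correct criticism of the statement as written.
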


\bigskip

As shown in Proposition~\ref{PS1} the convergence of the Monte Carlo estimators holds in $L^1(\Td)$ topology. However, in order to recover a typical $N^{-1/2}$ statistical convergence rate of the Monte-Carlo method, we need to work in a weak topology $W^{-k,2}(\Td)$ due to low regularity estimates \eqref{space-1}.  Further, we obtain the convergence rate for the  deviation estimator
\begin{align}
&N^{1/2} \left\| \frac1N \sum_{n=1}^N
\Big| (\vr^n,\vm^n,S^n)(t, \cdot)  - \frac1N \sum_{m=1}^N (\vr^m,\vm^m,S^m)(t, \cdot) \Big| \ - \Dev{(\vr,\vm,S)(t, \cdot)}
\right\|_{W^{-k,2}(\Td;\R^{d+2})}
 \br
& \aleq  1
\quad \quad \mbox{ for } k > \frac d 2,\,  N = 1,2, \dots, \  \prst-\mbox{a.s}.
\label{C4a}
\end{align}

\section{Convergence and error estimates of a FV method}\label{CON}
Our next aim is to approximate the Navier--Stokes--Fourier system \eqref{i1}--\eqref{i5} in space and time. To this end we apply
the viscous finite volume method proposed by Feireisl et al.~\cite[Definition 2.3]{FLMS_FVNSF}, see Section~\ref{FV} for its
complete presentation.
We point out that the techniques and analysis below will not be limited to the specific numerical method, and can be extended to a broader class of consistent and stable approximation methods.

\subsection{Deterministic data} \label{det}
We start by presenting the convergence results of the FV method \eqref{scheme}. We note in passing that $\Delta t \searrow 0$ and $h \searrow 0$ are small positive parameters for time and space discretization, respectively. Recalling \cite[Theorem~5.6]{FLMS_FVNSF}, we have the following convergence for the deterministic data.
\begin{mdframed}[style=MyFrame]

\vspace{-0.5cm}
\begin{Proposition}[\textbf{Convergence}] \label{PF1}
Suppose that the initial data are regular belonging to the following spaces
\begin{equation*} 
	(\vr_0,\vu_0,\vt_0) \in W^{3,2}(\Td;\R^{d+2}),\ 0 < \underline{\vr} \leq \min_{\Td} \vr_0,\ 0 < \underline{\vt} \leq \min_{\Td} \vt_0.
\end{equation*}
Let  $\vc{g} \in W^{2,2}(\Td; \R^d)$, $\mu > 0$, $\lambda \geq 0$, and $\kappa >0$.
Let  $\left(\vrh, \vuh, \vth\right)_{h \searrow 0}$ be a family of numerical solutions obtained by the FV method \eqref{scheme} with $\TS \approx h$ satisfying
	\begin{equation} \label{F4bis}
	 \sup_{h} \left(\| \vrh,\vrh^{-1},\vth,\vth^{-1} \|_{L^\infty((0,T)\times \Td;\R^4)} \right)< \infty.
	\end{equation}
Then
\begin{align*}
&  \left\| \big( \vrh, S_h \big) - \big( \vr, S \big)\right\|_{L^p( (0,T) \times \Td; \R^2)} + \left\| \vm_h - \vm \right\|_{L^p(0,T; L^2(\Td;\R^d))}
\quad \to 0 \ \ \  \mbox{as}\ h \to 0
\end{align*}
for any $1 \leq p < \infty$, where	$(\vr, \vu,\vt)$ is a
global classical solution  to the Navier--Stokes--Fourier system \eqref{i1}--\eqref{i5}; $\vm = \vr \, \vu$ and $S= \vr \log \left( \frac{\vt^{c_v}}{\vr}\right)$ are the corresponding momentum and entropy.
\end{Proposition}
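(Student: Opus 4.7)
The plan is to follow the classical three-step pathway --- stability, consistency, weak-strong uniqueness --- for identifying the limit of a numerical scheme with a strong PDE solution, with the uniform hypothesis \eqref{F4bis} playing a pivotal role in every step.

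First, I would combine the discrete mass, energy and entropy balances satisfied by the scheme \eqref{scheme} with the uniform positivity and upper bounds on $\vrh,\vth$ from \eqref{F4bis} to derive, uniformly in $h$, an $L^2(0,T;W^{1,2}(\Td;\R^d))$ estimate on $\vuh$ (via a discrete Korn inequality applied to $\mathbb{S}(\mu,\lambda,\Grad\vuh):\Grad\vuh$) and an $L^2((0,T)\times\Td;\R^d)$ estimate on $\Grad\vth$. The numerical (upwind) dissipation built into the scheme provides the additional time/space compactness needed to pass to the limit in the nonlinear convective terms. Along a subsequence $h_n\to 0$, I extract weak limits $\vr,\vu,\vt$ that inherit the same $L^\infty$ bounds and positive lower bounds as the discrete quantities. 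The consistency of the FV method, established in \cite{FLMS_FVNSF}, then identifies $(\vr,\vu,\vt)$ as a dissipative weak solution of \eqref{i1}--\eqref{i5} with data $(\vr_0,\vu_0,\vt_0)$; the $L^\infty$ control on $\vrh,\vth$ makes the identification of the weak limits of $\vrh\vuh\otimes\vuh$, $\vrh\vth$, $\vrh\vuh\vth$ and $\mathbb{S}(\mu,\lambda,\Grad\vuh):\Grad\vuh$ routine.

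Next, since the initial data lie in $W^{3,2}$ with positive lower bounds, there exists a local-in-time classical solution of \eqref{i1}--\eqref{i5}. The conditional weak-strong uniqueness principle for the Navier--Stokes--Fourier system, see \cite{brezina}, whose hypotheses require precisely uniform bounds on density and temperature of the weak solution, forces the limit $(\vr,\vu,\vt)$ to coincide with the classical solution on its interval of existence; the inherited $L^\infty$ bounds preclude any finite-time blow-up and extend the classical solution to the whole interval $[0,T]$. To obtain the strong $L^p$ convergence, I would insert the classical solution as the test field in the relative energy inequality verified by the discrete solution; the consistency errors vanish as $h\to 0$ and a Gronwall argument yields $\intTd{\RE{\vrh,\vuh,\vth}{\vr,\vu,\vt}(\tau,\cdot)}\to 0$ uniformly in $\tau\in[0,T]$. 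Combined with the $L^\infty$ bounds \eqref{F4bis}, this upgrades to strong $L^p$ convergence of $\vrh, S_h, \vm_h$ for any $1\leq p<\infty$, and the uniqueness of the limit promotes the subsequence convergence to convergence of the whole family.

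The main obstacle, and the sole reason for assuming \eqref{F4bis}, is that the Navier--Stokes--Fourier system with perfect gas pressure admits neither unconditional global weak solutions nor unconditional weak-strong uniqueness. Each critical step --- the identification of the nonlinear limits in the consistency argument, the applicability of weak-strong uniqueness, and the global-in-time extension of the classical solution --- rests on the uniform $L^\infty$ control of $\vr,\vt$ (and their reciprocals) supplied by this hypothesis.
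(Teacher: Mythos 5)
Your proposal follows essentially the same route as the paper: conditional stability/consistency giving weak* convergence of the FV approximations to a dissipative weak solution, the conditional weak--strong uniqueness principle of B\v{r}ezina--Feireisl--Novotn\'y (relative energy argument) to identify the limit with the classical solution and upgrade to strong $L^p$ convergence, and the uniform bounds on density and temperature to extend that solution to all of $[0,T]$. The only point worth flagging is that your assertion that the inherited $L^\infty$ bounds ``preclude any finite-time blow-up'' is precisely the nontrivial conditional regularity theorem of Feireisl, Wen and Zhu which the paper invokes for this step; it is not an elementary consequence of the bounds themselves.
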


\vspace{-0.2cm}
\end{mdframed}

This result is based on the weak-strong uniqueness principle for the Navier--Stokes--Fourier system \eqref{i1}--\eqref{i5}.
Indeed, in general the FV method only converges weakly* to a dissipative weak solution, see \cite{FLMS_FVNSF}. Note that this convergence is conditional and requires boundedness of numerical densities and temperatures \eqref{F4bis}. Further,  due to the dissipative weak-strong uniqueness principle
a strong solution is stable in the class of dissipative solutions if \eqref{F4bis} holds, see~\cite{brezina}. Thus, as long as a strong solution exists, FV solutions converge strongly to the strong solution. Now, applying
the conditional regularity result due to
Feireisl, Wen, and Zhu \cite{FeiWenZhu} the strong solution is global in time, since density and temperature are bounded on $[0,T]$. The $W^{3,2}$ regularity of initial data is inherited by the strong solution, too.

Further, assuming slightly higher regularity of data we can derive convergence rate of the FV method by means of the relative energy method, see our recent work \cite[Theorem 5.2]{BLMSY}.


\begin{mdframed}[style=MyFrame]

\vspace{-0.5cm}
\begin{Proposition}[\textbf{Error estimates}] \label{PF2}
 Let the initial data $(\vr_0, \vu_0, \vt_0)$ belong to the regularity class
\begin{equation*}
	(\vr_0,\vu_0,\vt_0) \in W^{6,2}(\Td;\R^{d+2}),\   \min_{\Td} \vr_0 = \underline{\vr} > 0,\
		\min_{\Td} \vt_0 = \underline{\vt} > 0
\end{equation*}
and  $\vc{g} \in W^{5,2}(\Td; \R^d)$, $\mu > 0$, $\lambda \geq 0$, and $\kappa > 0$.
Let $(\vrh, \vuh,\vth)$ be a numerical solution resulting from the FV method \eqref{scheme} with $(\TS, h) \in (0,1)^2$ satisfying \eqref{F4bis}.

Then the following estimates hold
\begin{equation}\label{CR2}
\norm{ \big( \vrh, \vm_h, S_h \big) - \big(\vr, \vm, S \big)}_{L^\infty(0,T; L^2(\Td;\R^{d+2}))}
\leq C( \TS^{1/2}+ h^{1/4}),
\end{equation}
where \[C=C( T, \| \vc{g} \|_{W^{5,2}(\Td; \R^d)},  \| (\vr_0, \vu_0,\vt_0)\|_{W^{6, 2}(\Td; \R^{d+2})},
\| (\vrh, \vrh^{-1} ,\vth, \vth^{-1}) \|_{L^\infty((0,T)\times \Td; \R^4)}, \Un{\vr}, \Un{\vt}).\]
	\end{Proposition}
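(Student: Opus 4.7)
The plan is to apply the relative energy method exactly in the spirit of \cite{BLMSY}, using the strong classical solution $(\vr,\vu,\vt)$ as the reference and the FV solution $(\vrh,\vuh,\vth)$ as the approximate one. First I would secure the existence of a sufficiently regular strong solution on $[0,T]$: the $W^{6,2}$ initial data together with $\vc{g}\in W^{5,2}$ yield a local strong solution by the classical theory, and the uniform boundedness of $\vrh,\vth,1/\vrh,1/\vth$ from \eqref{F4bis} passes to the limit via Proposition~\ref{PF1}, producing bounded $\vr,\vt,1/\vr,1/\vt$ on $[0,T]$. The conditional regularity result of Feireisl--Wen--Zhu then promotes this to a global strong solution with regularity matching that of the data, which is precisely what is needed to use $(\vr,\vu,\vt)$ as a smooth test triple.

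Next I would establish a \emph{discrete} relative energy inequality of the form
\begin{align*}
\intTd{\RE{\vrh,\vuh,\vth}{\tvr,\tvu,\tvt}(\tau,\cdot)} \leq \intTd{\RE{\vrh^0,\vuh^0,\vth^0}{\tvr(0),\tvu(0),\tvt(0)}} + \int_0^\tau \mathcal{R}_h(t)\,\dt,
\end{align*}
valid for any smooth reference $(\tvr,\tvu,\tvt)$ with $\tvr,\tvt>0$, where $\mathcal{R}_h$ collects the consistency residuals of the scheme. These residuals are bounded term by term using the boundedness hypothesis \eqref{F4bis}, the $W^{6,2}$/$W^{5,2}$ regularity of the reference, standard finite volume quadrature/interpolation estimates for the convective, pressure, viscous and heat-flux terms, and the $\TS$-order error from the backward Euler time stepping. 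The typical upwinding contribution produces a residual of order $h^{1/2}$ (giving $h^{1/4}$ after square rooting), the other spatial discretization errors are of order $h$ or better, and the time discretization residual is of order $\TS$.

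Substituting the strong solution $(\vr,\vu,\vt)$ for $(\tvr,\tvu,\tvt)$ and using that the initial relative energy $\RE{\vrh^0,\vuh^0,\vth^0}{\vr_0,\vu_0,\vt_0}$ is controlled by the interpolation error of the initial data (hence by $h^2$ under the given regularity), a discrete Gronwall argument applied to the residual-plus-relative-energy terms yields
\begin{equation*}
\sup_{\tau\in[0,T]}\intTd{\RE{\vrh,\vuh,\vth}{\vr,\vu,\vt}(\tau,\cdot)} \leq C(\TS + h^{1/2}),
\end{equation*}
with $C$ depending only on the quantities listed in the statement.

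Finally, the $L^\infty(0,T;L^2)$ bound \eqref{CR2} on the conservative variables $(\vr,\vm,S)$ follows from convexity of $R_E$ in $(\vr,\vm,S)$ together with \eqref{F4bis}, which implies a two-sided equivalence
\begin{equation*}
\RE{\vrh,\vuh,\vth}{\vr,\vu,\vt} \gtrsim \big| (\vrh,\vm_h,S_h) - (\vr,\vm,S) \big|^2
\end{equation*}
on the set where the reference and the numerical solution stay in a compact subset of the positivity cone; taking square roots yields the advertised rate $\TS^{1/2}+h^{1/4}$. The main obstacle I anticipate is the careful bookkeeping of the consistency residuals $\mathcal{R}_h$: the numerical diffusion produced by upwinding in the convective terms, the discrete pressure and entropy nonlinearities, and the discrete viscous dissipation must be estimated in a way that all constants depend only on $\underline\vr,\underline\vt$ and the $L^\infty$ bounds \eqref{F4bis}, while the precise order $h^{1/2}$ for the dominant upwind term is what ultimately fixes the final exponent $h^{1/4}$.
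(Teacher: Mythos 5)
Your proposal follows essentially the same route as the paper, which does not reprove this proposition but cites it directly from \cite[Theorem 5.2]{BLMSY} as a consequence of the relative energy method: conditional global regularity of the strong solution via Feireisl--Wen--Zhu under the boundedness hypothesis \eqref{F4bis}, a discrete relative energy inequality with consistency residuals, Gronwall, and convexity of $R_E$ in $(\vr,\vm,S)$ to pass to the $L^2$ distance. The only slight difference in emphasis is that the paper's remark attributes the limiting $h^{1/4}$ rate to the lack of a bound on the discrete velocity gradient semi-norm rather than to the upwind diffusion alone, but this does not affect the correctness of your outline.
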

	
\vspace{-0.7cm}
\end{mdframed}


\subsection{Random data}
\label{rnd_data}
We are now ready to consider random initial data $(\vr_0, \vu_0, \vt_0) $ and random parameters $(\mu , \lambda, \kappa, \vc{g})$ belonging to the set $D$.
 We start with discussing the measurability of the FV approximations. The FV method \eqref{scheme} is a time-implicit method. Consequently, one needs to solve a nonlinear system and might possibly get non-unique approximate solutions.
Applying Bensoussan and Temam \cite[Theorem A.1]{BenTem}  and \cite[Section 3.2]{FLSY_MC1}
there is a  {\em measurable selection}, specifically a Borel mapping, such that
\[
\Big[ \vr_0, \vu_0, \vt_0 , \mu, \lambda, \kappa, \vc{g} \Big] \in D \mapsto
(\vrh, \vuh, \vth) \in \Vrmh.
\]
Here $\Vrmh$ represents the set of all possible FV approximations those correspond to the data $\Big[ \vr_0, \vu_0, \vt_0 , \mu, \lambda, \kappa, \vc{g} \Big]$  for a fixed mesh discretisation parameter $h$.
Consequently, FV solutions $(\vrh, \vuh, \vth)$ under above selection are Borel measurable functions of the data.

As already discussed in Section~\ref{det} boundedness of numerical densities and temperatures is crucial in order to obtain the convergence to a global-in-time strong solution of the Navier--Stokes--Fourier system \eqref{i1}--\eqref{i5}. In statistical analysis we will weaken this assumption and only assume their boundedness in probability.
\medskip

\begin{mdframed}[style=MyFrame]
\textbf{Assumption}\ [{\bf Boundedness in probability of FV approximations}]

\noindent We assume that a  sequence of FV solutions {$\left(\vrh, \vuh, \vth \right)_{h\searrow 0}$} is \emph{bounded in probability}, i.e.
		\begin{align}
		&\mbox{for any}\ \ep > 0, \ \mbox{there exists}\ M= M(\ep) \ \mbox{such that}
		\mbox{ for all } h \in (0,1)
        \br & {\prst\left(\left[  \| \vrh, \vrh^{-1}, \vth, \vth^{-1}\|_{L^\infty((0,T)\times \Td;\R^4) }  > M
		\right]\right) \leq \ep.} \label{bip}
	\end{align}
\end{mdframed}
\medskip

Our next goal is to prove the convergence of random numerical solutions $\left(\vrh(\omega), \vuh(\omega), \vth(\omega) \right)_{h\searrow 0},$ $\omega \in \Omega$ obtained by the FV method to a strong statistical solution $(\vr, \vu, \vt)$.  Applying intrinsic stochastic compactness arguments, based on the Skorokhod representation theorem \cite{Jakub} and Gy\" ongy--Krylov theorem \cite{Gkrylov} we obtain the following convergence result, see also
\cite{FeiLuk2021}.

\begin{enumerate}
	
	\item We consider numerical solutions   $(\vrh, \vuh, \vth)$
Borel measurable with respect to the data
	\[
	\vc{U}_0=\Big[ \vr_0, \vu_0, \vt_0, \mu, \lambda, \kappa, \vc{g} \Big] \in D \quad \mbox{ satisfying } \quad \eqref{Ass-B}.
	\]

	\item
	
	Taking a subsequence of FV solutions $(\vrhk, \vu_{h_k}, \vt_{h_k})_{h_k \searrow 0}$ we consider a family of random variables
	\begin{align*}
	& \Big[ \vr_0, \vu_0, \vt_0, \mu, \lambda,\kappa, \vc{g}, \vrhk,  \vm_{h_k}:= \vrhk \vu_{h_k}, S_{h_k}:=\vrhk  \log \left( \frac{(\vt_{h_k})^{c_v}}{\vrhk}\right), \Lambda_{h_k} \Big]_{h_k \searrow 0}
	\br
	& \mbox{with } \  \Lambda_{h_k} = \| (\vrhk, \vrhk^{-1}, \vthk, \vthk^{-1}) \|_{L^\infty ((0,T) \times \Td; \R^{4})}
	\end{align*}
	ranging in the Polish space
	\begin{eqnarray*}
	&& Y = W^{3,2} (\Td) \times W^{3,2} (\Td; \R^d) \times  W^{3,2} (\Td) \times \R \times \R \times \R \times W^{2,2}(\Td; \R^d)
	\times W^{-k,2}((0,T) \times \Td)  \br
      && \hspace{1.5cm} \times W^{-k,2}((0,T) \times \Td; \R^d)  \times W^{-k,2}((0,T) \times \Td) \times \R, \quad   k > d/2.
	\end{eqnarray*}
	
	In view of {hypothesis \eqref{bip}}, the family of laws associated to
	\[
	\Big[ \vr_0, \vu_0, \vt_0, \mu, \lambda, \kappa, \vc{g}, \vrhk,   \vm_{h_k}, S_{h_k},  \Lambda_{h_k} \Big]_{h_k \searrow 0}
	\]
is tight in $Y$. Applying the Skorokhod representation theorem \cite{Jakub} we conclude that there is a new probability space and a new sequence of random variables
\[
\Big[ \tvr_{0,h_k},  \tvu_{0,h_k}, \tvt_{0,h_k}, \widetilde\mu_{h_k}, \widetilde \lambda_{h_k},\widetilde \kappa_{h_k}, \widetilde{ \vc{g}}_{h_k}, \widetilde{\vr}_{h_k},   \widetilde{\vm}_{h_k}, \widetilde{S}_{h_k}, \widetilde{\Lambda}_{h_k} \Big]	
\sim \Big[ \vr_0, \vu_0, \vt_0, \mu, \lambda, \kappa, \vc{g}, \vrhk,   \vm_{h_k}, S_{h_k},  \Lambda_{h_k} \Big]
\]
satisfying
\begin{align*}
\widetilde{\Lambda}_{h_k} &= {\| (\widetilde{\vr}_{h_k} , \widetilde{\vr}_{h_k}^{-1} , \widetilde{\vt}_{h_k} , \widetilde{\vt}_{h_k}^{-1}) \|_{L^\infty((0,T)\times \Td; \R^{4})} < \infty,} \  \br
\tvr_{0,{h_k}} &\to \tvr_0, \ \tvt_{0,{h_k}} \to \widetilde{\vt}_0 \ \mbox{in}\ W^{3,2}(\Td), \br
\tvu_{0,{h_k}} &\to \widetilde{\vu}_0 \ \mbox{in}\ W^{3,2}(\Td; \R^d), \br
\widetilde{\mu}_{h_k} &\to \widetilde{\mu},\ \widetilde{\lambda}_h \to \widetilde{\lambda},
\ \widetilde{\kappa}_h \to \widetilde{\kappa}, \
\widetilde{\vc{g}}_{h_k} \to \widetilde{\vc{g}} \ \mbox{in}\ C(\Td; \R^d) , \br
\tvr_{h_k} &\to \tvr, \  \widetilde{S}_{h_k} \to  \widetilde{S} \ \mbox{in}\ L^r((0,T) \times \Td),\  1 \leq r < \infty,  \br
 \widetilde{\vm}_{h_k} & \to  \widetilde{\vm} \ \mbox{in}\ L^r(0,T; L^2 (\Td; \R^d)),\ 1 \leq r < \infty, 
\end{align*}
a.s.,
where thanks to Proposition~\ref{PF1} $ (\tvr, \tvu, \tvt)$ is the strong solution of the Navier--Stokes--Fourier system \eqref{i1}--\eqref{i5} corresponding to the data
\[
\Big[ \tvr_0, \tvu_0, \tvt_0, \widetilde{\mu}, \widetilde{\lambda},  \widetilde{\kappa},\widetilde{\vc{g}} \Big]
\sim \Big[ \vr_0, \vu_0, \vt_0, \mu, \lambda,\kappa, \vc{g} \Big] .
\]
The symbol $\sim$ represents equivalence in the law of random variables.

\item

 Due to the uniqueness of a strong solution, there is no need to extract a subsequence as the limit is unique.
More importantly, by means of the Gy\" ongy--Krylov theorem \cite{Gkrylov} we recover the convergence in the original
probability space,
\begin{align}\label{F6}
	\| (\vrh, S_h)- (\vr, S) \|_{L^p((0,T) \times \Td; \R^2)} &\to 0  \ \mbox{in probability}, \br
	\| \vm_h - \vm \|_{L^p(0,T; L^2( \Td; \R^d))} &\to 0\  \mbox{in probability} 
\end{align}
for any $1 \leq p < \infty$,  where $(\vr, \vu,\vt):=(\vr, \vu, \vt)[\vc{U}_0, t]$ is the strong solution to the Navier--Stokes--Fourier system \eqref{i1}--\eqref{i5} emanating from $\vc{U}_0$; $\vm = \vr \vu$ and $S= \vr \log \left( \frac{\vt^{c_v}}{\vr}\right)$ are the corresponding momentum and entropy.
	\end{enumerate}

Having obtained the convergence of random numerical solution $ (\vrh(\omega), \vuh(\omega), \vth(\omega) ),\ \omega \in \Omega,$ we are ready to show the convergence of the statistical moments obtained by the Monte Carlo FV method.

\section{Convergence of the Monte Carlo FV method}
\label{MOCA}
Let us start by splitting the error of the Monte Carlo FV approximation in the following way
\begin{align*}
&\frac{1}{N} \sum_{n = 1}^N  \big(\vr_h^n, \vm_h^n, S_h^n \big)  - \expe{\big(\vr,\vm,S \big)} \br
=& \frac{1}{N} \sum_{n = 1}^N  \big(\vr^n, \vm^n, S^n \big)  - \expe{\big(\vr,\vm,S \big)} + \frac{1}{N} \sum_{n = 1}^N  \bigg[ \big(\vr_h^n, \vm_h^n, S_h^n \big)  -\big(\vr^n, \vm^n, S^n \big)  \bigg].
\end{align*}
Combining the statistical estimates \eqref{eq-slln}, \eqref{C2} and the deterministic convergence analysis \eqref{F6}, we obtain the following convergence results.

\begin{mdframed}[style=MyFrame]
	\begin{Theorem}[\textbf{Convergence}] \label{FVT1}
		Let the data  be random variables
\[
	\vc{U}_0=\Big[ \vr_0, \vu_0, \vt_0, \mu, \lambda, \kappa, \vc{g} \Big] \in D \quad \mbox{ satisfying } \quad \eqref{Ass-B}.
	\]	
Suppose that $\vc{U}_0^n = [\vr_0^n, \vu_0^n, \, \vt_0^n, \, \mu^n, \lambda^n,  \, \kappa^n, \, \vc{g}^n]$, $n=1,2,\dots, N$ are i.i.d. copies of  random data.
Let $\left(\vrh^n, \vuh^n, \vth^n \right)_{h \searrow 0}$
be a sequence of FV solutions \eqref{scheme} corresponding to these data samples.
Assume that FV solutions $\left(\vrh^n, \vuh^n, \vth^n\right)_{h\searrow 0},$  $n=1,2,\dots$
are bounded in probability, cf. \eqref{bip}.

Then for the Monte Carlo FV estimators of the expectation and deviation
\begin{align*}
\frac1N\sum_{n = 1}^N (\vrh^n, \vm_h^n, S_h^n),
\quad  \quad
 \frac1N \sum_{n=1}^N
\Big| (\vrh^n,\vm_h^n,S_h^n)(t, \cdot)  - \frac1N \sum_{m=1}^N (\vrh^m,\vm_h^m,S_h^m)(t, \cdot)\Big|
\end{align*}
we have that for any $\ep > 0$ there hold

\begin{align*} 
 &\lim_{h \to 0}\prst \Bigg[  \lim_{N \to \infty}  \left\| \frac{1}{N} \sum_{n = 1}^N (\vrh^n, \vm_h^n, S_h^n) - \expe{\big(\vr,\vm,S \big)} \right\|_{L^p(0,T; L^1(\Td;\R^{d+2}))}  \leq \ep \Bigg]  = 1,
\\
& \lim_{h \to 0}\prst \Bigg[   \lim_{N \to \infty} \left\|  \frac1N \sum_{n=1}^N
\Big| (\vrh^n, \vm_h^n, S_h^n)  - \frac1N \sum_{m=1}^N (\vrh^m, \vm_h^m, S_h^m) \Big| - \Dev{\big(\vr,\vm,S \big)} \right\|_{L^p(0,T; L^1(\Td;\R^{d+2}))}
\br
&\hspace{2cm} \leq \ep \Bigg]  = 1 
\end{align*}
for $p \in [1,\infty)$.
\end{Theorem}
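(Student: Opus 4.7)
The plan is to split the Monte Carlo FV error for the expectation estimator into a pure statistical sampling part plus a pathwise approximation part,
\begin{align*}
\frac{1}{N}\sum_{n=1}^N (\vrh^n, \vm_h^n, S_h^n) - \expe{(\vr, \vm, S)} = I_1(N) + I_2(N,h),
\end{align*}
with $I_1(N) := \frac{1}{N}\sum_{n=1}^N (\vr^n, \vm^n, S^n) - \expe{(\vr, \vm, S)}$ and $I_2(N,h) := \frac{1}{N}\sum_{n=1}^N \big[(\vrh^n, \vm_h^n, S_h^n) - (\vr^n, \vm^n, S^n)\big]$, and to dispose of the two pieces by different mechanisms: Proposition~\ref{PS1} for the sampling term $I_1$, and the pathwise FV convergence \eqref{F6} upgraded by uniform integrability for the approximation term $I_2$.

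For $I_1(N)$, Proposition~\ref{PS1} yields pointwise-in-$t$ $\prst$-a.s.\ convergence in $L^1(\Td;\R^{d+2})$; the a priori bound \eqref{space-1} supplies a $t$-uniform, $\prst$-integrable dominator (under \eqref{Ass-B}), so dominated convergence in the time variable upgrades this to $\prst$-a.s.\ convergence in $L^p(0,T;L^1(\Td;\R^{d+2}))$ for every $p \in [1,\infty)$. For $I_2(N,h)$, set $\xi_h^n := \|(\vrh^n, \vm_h^n, S_h^n) - (\vr^n, \vm^n, S^n)\|_{L^p(0,T; L^1(\Td; \R^{d+2}))}$; since the data samples are i.i.d., so are the $\xi_h^n$, and by \eqref{space-1} they are dominated $\prst$-a.s.\ by a random variable of the form $C(T,\Ov{g})\big(1+\intTd{[E(\vr_0^n,\vu_0^n,\vt_0^n)-S(\vr_0^n,\vt_0^n)-(c_v+1)\vr_0^n+1]}\big)$ whose $\prst$-expectation is finite by \eqref{Ass-B}. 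The triangle inequality gives $\|I_2(N,h)\|_{L^p(0,T; L^1)} \leq \frac{1}{N}\sum_n \xi_h^n$, hence the classical Kolmogorov SLLN yields $\lim_{N\to\infty}\frac{1}{N}\sum_n \xi_h^n = \expe{\xi_h^1}$ $\prst$-a.s., and $\expe{\xi_h^1} \to 0$ follows from Vitali's theorem, which combines the convergence in probability $\xi_h^n \to 0$ supplied by \eqref{F6} (read in the $L^p$-time/$L^1$-space topology) with the uniform integrability above.

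Putting the pieces together, $\limsup_{N\to\infty} \|I_1(N) + I_2(N,h)\|_{L^p(0,T;L^1)} \leq \expe{\xi_h^1}$ $\prst$-a.s. Since $\expe{\xi_h^1}$ is deterministic and vanishes with $h$, for every fixed $\ep > 0$ and all sufficiently small $h$ the event $\{\lim_N\|\cdot\|_{L^p(0,T;L^1)} \leq \ep\}$ has full $\prst$-probability, which proves the first claim. The deviation estimator is handled analogously: the decomposition used in \eqref{C2} reduces the problem to Proposition~\ref{PS1}-type statistical errors plus an FV residual that, via $\big||a|-|b|\big| \leq |a-b|$, is controlled by $\frac{2}{N}\sum_n \xi_h^n$, and is then disposed of exactly as in the previous step. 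The main obstacle is reconciling the two distinct modes of convergence---$\prst$-a.s.\ in $N$ from the SLLN, in probability in $h$ from the FV scheme---and assumption \eqref{Ass-B} enters precisely to make the Vitali/uniform-integrability argument go through, promoting $\xi_h^1 \to 0$ in probability to $\expe{\xi_h^1} \to 0$.
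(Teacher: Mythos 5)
Your decomposition into the sampling error $I_1(N)$ and the FV residual $I_2(N,h)$ is exactly the splitting the paper uses, and your treatment of $I_1$ via Proposition~\ref{PS1} matches the paper's. Where you diverge is in the treatment of $I_2$: the paper stays entirely at the level of events, using \eqref{F6} to get $\lim_{h\to 0}\prst\big[\|I_2(N,h)\|\le\ep\big]=1$ for each fixed $N$ and then invoking the triangle inequality inside the probability, whereas you convert $I_2$ into the deterministic quantity $\expe{\xi_h^1}$ by applying the Kolmogorov SLLN to the i.i.d.\ pathwise errors $\xi_h^n$ and then sending $\expe{\xi_h^1}\to 0$ via Vitali. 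Your route handles the nested limits ($N\to\infty$ inside the probability, $h\to 0$ outside) more transparently than the paper's rather terse event manipulation, so it is a genuine improvement in that respect.

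However, there is one step you have not justified. Both the finiteness $\expe{\xi_h^1}<\infty$ (needed for Kolmogorov's SLLN) and the uniform integrability of $\{\xi_h^1\}_{h}$ (needed for Vitali) rest on your claim that $\xi_h^n$ is dominated $\prst$-a.s.\ by $C(T,\Ov g)\big(1+\intTd{[E_0^n-S_0^n-(c_v+1)\vr_0^n+1]}\big)$, which you attribute to \eqref{space-1}. But \eqref{space-1} is an estimate for the \emph{exact} strong solution only; it says nothing about $\|(\vrh^n,\vm_h^n,S_h^n)\|_{L^\infty(0,T;L^1)}$, and the hypothesis \eqref{bip} (boundedness in probability) cannot supply an a.s.\ integrable dominator either. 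To close this you must invoke the discrete stability of the scheme \eqref{scheme} --- discrete mass conservation, the discrete total energy balance and the discrete entropy inequality established in \cite{FLMS_FVNSF} --- which yield a discrete analogue of \eqref{space-1} with the (projected) initial relative energy on the right-hand side, uniformly in $h$. With that ingredient added, your argument is complete; without it, the SLLN/Vitali step is unsupported. The paper's own proof sidesteps this entirely by never taking expectations of the FV residual.
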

\end{mdframed}

\begin{proof}
In what follows we only present the proof for the density $\vr$ as the same procedure can be done for the momentum $\vm$ and the entropy $S$.
\begin{itemize}
\item  Monte Carlo FV expectation estimator: According to the convergence in probability \eqref{F6} we have that for any $\ep > 0$ it holds
\begin{align*}
 \lim_{h \to 0}\prst \Bigg[   \left\| \frac{1}{N} \sum_{n = 1}^N \big(\vrh^n - \vr^n \big)\right\|_{L^p((0,T) \times \Td)}  \leq \ep \Bigg]  = 1 \quad \mbox{ for every}\ N=1,2,\dots,\  p \in [1,\infty).
\end{align*}
Applying the triangle inequality
\begin{align*}
& \prst \Bigg[   \left\| \frac{1}{N} \sum_{n = 1}^N \vrh^n - \expe{\vr} \right\|_{L^p((0,T) \times \Td)}  \leq \ep \Bigg]
\br
& \geq  \prst \Bigg[  \left\| \frac{1}{N} \sum_{n = 1}^N \big(\vrh^n - \vr^n \big)\right\|_{L^p((0,T) \times \Td)} +  \left\| \frac{1}{N} \sum_{n = 1}^N \vr^n - \expe{\vr} \right\|_{L^p((0,T) \times \Td)}  \leq \ep \Bigg]
\end{align*}
and $\prst$-a.s. convergence result \eqref{eq-slln}
\begin{align*}
 \lim_{N \to \infty} \left\| \frac{1}{N} \sum_{n = 1}^N \vr^n - \expe{\vr} \right\|_{L^p(0,T; L^1(\Td))} \to 0  \hspace{1cm}
\mbox{ for } \ p \in [1,\infty),\ \quad \prst-\mbox{a.s.}
\end{align*}
we obtain that for any $\ep > 0$ it holds
\begin{align*}
 \lim_{h \to 0}\prst \Bigg[   \lim_{N \to \infty} \left\| \frac{1}{N} \sum_{n = 1}^N \vrh^n - \expe{\vr} \right\|_{L^p(0,T; L^1(\Td))}  \leq \ep \Bigg]  = 1 \quad \mbox{ for }\  p \in [1,\infty).
\end{align*}

\item  Monte Carlo FV deviation estimator: Analogously to the expectation estimator,  using \eqref{F6} we obtain for any $\ep > 0$ 
\begin{align*}
& \lim_{h \to 0}\prst \Bigg[  \left\| \left(\vrh^n - \frac{1}{N} \sum_{m = 1}^N \vrh^m \right) - \left(\vr^n - \frac{1}{N} \sum_{m = 1}^N \vr^m \right) \right\|_{L^p((0,T) \times \Td)}  \leq \ep \Bigg]  = 1 \br
& \hspace{3cm}  \mbox{ for every} \ N=1,2,\dots,\ n \leq N, \ p \in [1,\infty),
\end{align*}
which gives
\begin{align*}
1 \geq & \lim_{h \to 0}\prst \Bigg[  \left\| \frac{1}{N} \sum_{n = 1}^N \left(\vrh^n - \frac{1}{N} \sum_{m = 1}^N \vrh^m \right) - \frac{1}{N} \sum_{n = 1}^N\left(\vr^n - \frac{1}{N} \sum_{n = 1}^N \vr^m \right) \right\|_{L^p((0,T) \times \Td)}  \leq \ep \Bigg]
\br
\geq &  \lim_{h \to 0}\prst \Bigg[   \frac{1}{N} \sum_{n = 1}^N  \left\| \left(\vrh^n - \frac{1}{N} \sum_{m = 1}^N \vrh^m \right) - \left(\vr^n - \frac{1}{N} \sum_{n = 1}^N \vr^m \right) \right\|_{L^p((0,T) \times \Td)}  \leq \ep \Bigg]  \geq 1.
\end{align*}
Together with the triangle inequality and the $\prst$-a.s. convergence result \eqref{C2} we have
\begin{align*}
 \lim_{h \to 0}\prst \Bigg[   \lim_{N \to \infty} \left\|  \frac1N \sum_{n=1}^N
\Big| \vrh^n  - \frac1N \sum_{m=1}^N \vrh^m\Big| - \Dev{\vr} \right\|_{L^p(0,T; L^1(\Td))}  \leq \ep \Bigg]  = 1 \quad \mbox{ for }\  p \in [1,\infty).
\end{align*}
\end{itemize}

\end{proof}

\section{Error estimates of the Monte Carlo FV method}
\label{sec_error}
The aim of this section is to analyse the convergence rate of the Monte Carlo FV approximations.
To this end, let us consider more regular random data, i.e.
\begin{align*}
& \vr_0 \in W^{6,2}(\Td), \ \vu_0 \in W^{6,2}(\Td;\R^d), \ \vt_0 \in W^{6,2}(\Td), \ \min_{\Td}  \vr_0 > 0,\
		\min_{\Td} \vt_0  > 0, \  \br
&\mu > 0,\ \lambda \geq 0,\ \kappa  > 0, \ \vc{g} \in W^{5,2}(\Td; \R^d),\  \ \ \prst-\mbox{a.s.}
\end{align*}
Under the assumption \eqref{bip} that the FV solutions $(\vrh, \vuh, \vth)_{h \searrow 0}$ are bounded in probability, it follows from the arguments of Section \ref{rnd_data} that there exists
a random classical solution  $(\vr, \vu, \vt)$ of the Navier--Stokes--Fourier system \eqref{i1}--\eqref{i5}, such that
\begin{eqnarray}
&&(\vr,\vu,\vt) \in  C([0,T]; W^{6,2}(\Td;\R^{d+2})) \cap C^1([0,T] \times \Td;\R^{d+2}), \qquad  \prst-\mbox{a.s.} \label{reg_class}
\end{eqnarray}
and the numerical solutions converge to this strong solution in probability. Note that higher deterministic regularity is a classical result for local strong solutions \cite{KawShi, BreFeiHof}. We refer to our recent work \cite[Proposition~2.1]{BLMSY} for the global regularity result in the class \eqref{reg_class} obtained for bounded density and temperature.
Combining the statistical estimates \eqref{C3a}, \eqref{C4a} and the deterministic error estimates \eqref{CR2}, we obtain  a priori error estimates for the Monte Carlo FV approximations.

\begin{mdframed}[style=MyFrame]
\begin{Theorem} [{\bf Error estimates}] \label{FVT2}
Let the data  be random variables
\begin{align*}
&\vc{U}_0 = \Big[ \vr_0, \vu_0, \vt_0, \mu , \lambda, \kappa, \vc{g} \Big] \in D  \mbox{ satisfying } \quad \eqref{Ass-C}
\br
& \mbox{and } \ (\vr_0,\vu_0,\vt_0) \in W^{6,2}(\Td;\R^{d+2}), \ \vc{g}  \in W^{5,2}(\Td;\R^d), \ \prst-\mbox{a.s.}
\end{align*}
Suppose that $\vU_0^n = [\vr_0^n, \vu_0^n, \, \vt_0^n, \, \mu^n, \lambda^n,  \, \kappa^n, \, \vc{g}^n]$, $n=1,2,\dots, N$ are i.i.d. copies of  random data.
Let $\left(\vrh^n, \vuh^n, \vth^n \right)_{h \searrow 0}$
be a sequence of FV solutions \eqref{scheme} corresponding to these data samples.
Assume that FV solutions $\left(\vrh^n, \vuh^n, \vth^n\right)_{h\searrow 0},$  $n=1,2,\dots$
are bounded in probability, cf. \eqref{bip}.

Then the Monte Carlo FV expectation and deviation estimators
\begin{align*}
\frac1N\sum_{n = 1}^N (\vrh^n, \vm_h^n, S_h^n),
\quad  \quad
 \frac1N \sum_{n=1}^N
\Big| (\vrh^n,\vm_h^n,S_h^n)(t, \cdot)  - \frac1N \sum_{m=1}^N (\vrh^m,\vm_h^m,S_h^m)(t, \cdot)\Big|
\end{align*}
satisfy for every $N=1,2,\dots$ and all $t \in [0,T]$ the following error estimates: for any $\ep > 0$, there exists $K=K(\ep)$ such that
\begin{align}  \label{EE-1}
&\prst \Bigg[   \left\| \frac{1}{N} \sum_{n = 1}^N  \big(\vrh^n, \vm_h^n, S_h^n \big) (t, \cdot) - \expe{(\vr, \vm, S \big) (t, \cdot)}  \right\|_{W^{-k,2}(\Td;\R^{d+2})}
\br
& \hspace{1cm} \leq K \left( \Delta t^{1/2} + h^{1/4}+ N^{-1/2} \right)\Bigg] \geq 1 - \ep,
  \\
 & \prst \Bigg[  \left\|  \frac1N \sum_{n=1}^N
\Big| (\vrh^n, \vm_h^n, S_h^n)  - \frac1N \sum_{m=1}^N (\vrh^m, \vm_h^m, S_h^m) \Big| - \Dev{\big(\vr,\vm,S \big)} \right\|_{W^{-k,2}(\Td;\R^{d+2})}
\br
& \hspace{1cm}\leq K \left( \Delta t^{1/2} + h^{1/4}+ N^{-1/2} \right)\Bigg]  \geq 1 - \ep. \label{EE-2}
\end{align}
\end{Theorem}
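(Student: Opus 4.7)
The plan is to split the Monte Carlo FV error into a sample-wise spatio-temporal discretisation part plus a pure Monte Carlo sampling part and bound each separately using the deterministic estimate of Proposition~\ref{PF2} and the central limit estimate of Proposition~\ref{PS2}. Concretely, I would write
\begin{align*}
\frac{1}{N}\sum_{n=1}^N (\vrh^n, \vm_h^n, S_h^n) - \expe{(\vr,\vm,S)}
&= \frac{1}{N}\sum_{n=1}^N \Big[(\vrh^n, \vm_h^n, S_h^n) - (\vr^n, \vm^n, S^n)\Big] \\
&\quad + \Big[\frac{1}{N}\sum_{n=1}^N (\vr^n, \vm^n, S^n) - \expe{(\vr,\vm,S)}\Big],
\end{align*}
where $(\vr^n, \vu^n, \vt^n)$ is the strong solution issued from $\vU_0^n$, whose existence on the whole interval $[0,T]$ follows from the argument in Section~\ref{rnd_data} under \eqref{bip}.

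For the statistical (second) bracket I would appeal directly to \eqref{C3a} of Proposition~\ref{PS2}, which under \eqref{Ass-C} asserts boundedness in probability of $N^{1/2}\|\cdot\|_{W^{-k,2}}$ uniformly in $N$; so for any $\ep>0$ there is $K_1=K_1(\ep)$ with
$$\prst\Big[\,\big\|\tfrac{1}{N}\!\sum_{n=1}^N (\vr^n,\vm^n,S^n)(t,\cdot)-\expe{(\vr,\vm,S)(t,\cdot)}\big\|_{W^{-k,2}(\Td;\R^{d+2})}\leq K_1 N^{-1/2}\Big]\geq 1-\ep/2.$$
For the pathwise (first) bracket I would apply Proposition~\ref{PF2} sample by sample: on the event $\mathcal{G}_M^n=\{\Lambda_h^n\leq M,\ \|\vU_0^n\|_X\leq M\}$, where $\Lambda_h^n=\|(\vrh^n,(\vrh^n)^{-1},\vth^n,(\vth^n)^{-1})\|_{L^\infty}$, the constant in \eqref{CR2} collapses to a deterministic function $K_2(M)$ and
$$\|(\vrh^n-\vr^n,\vm_h^n-\vm^n,S_h^n-S^n)(t,\cdot)\|_{L^2(\Td;\R^{d+2})}\leq K_2(M)(\Delta t^{1/2}+h^{1/4}).$$
Hypothesis \eqref{bip} together with the tightness of the laws of the $X$-valued i.i.d.~data $\vU_0^n$ imply $\prst[\mathcal G_M^n]\to 1$ as $M\to\infty$, uniformly in $n$ and $h$; embedding $L^2\hookrightarrow W^{-k,2}$ and using the triangle inequality then yields a bound of the desired form $K_3(\ep)(\Delta t^{1/2}+h^{1/4})$ with probability at least $1-\ep/2$, uniformly in $N$. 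Combining both events gives \eqref{EE-1} with $K(\ep)=K_1+K_3$.

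For the deviation bound \eqref{EE-2} I would use the decomposition (with $Z=(\vr,\vm,S)$ and $\overline{Z}=\frac{1}{N}\sum_m Z^m$)
\begin{align*}
\frac{1}{N}\sum_n |Z_h^n-\overline{Z_h}| - \Dev{Z}
&= \frac{1}{N}\sum_n\big(|Z_h^n-\overline{Z_h}|-|Z^n-\overline{Z}|\big) + \Big(\tfrac{1}{N}\sum_n |Z^n-\overline{Z}| - \Dev{Z}\Big),
\end{align*}
so that the elementary inequality $||a|-|b||\leq|a-b|$ reduces the first sum to twice the pathwise FV error already controlled above, while \eqref{C4a} handles the second. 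The hard part will be keeping the PF2 constants $K_2$ under control uniformly in $N$: a crude union bound over the events $(\mathcal G_M^n)^c$ would force $M$ (and thus $K_2(M)$) to grow with $N$ and spoil the rate. My plan is to exploit the i.i.d.~structure via a truncation–Markov argument applied to the Cesàro average $\frac{1}{N}\sum_n K_2(\Lambda_h^n)$, combined with the second-moment control coming from \eqref{Ass-C}, so as to obtain a constant $K_3(\ep)$ that is independent of $N$.
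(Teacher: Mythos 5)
Your proposal follows essentially the same route as the paper: the identical splitting into a pathwise FV discretisation error (controlled sample-by-sample via Proposition~\ref{PF2} and the embedding $L^2\hookrightarrow W^{-k,2}$) plus a Monte Carlo sampling error (controlled by \eqref{C3a}, resp.~\eqref{C4a} for the deviation estimator). If anything, you are more careful than the paper on the one delicate point: the paper simply asserts that, since $K$ in Proposition~\ref{PF2} is independent of $n$, the same bound transfers to the Ces\`aro average with the same probability $1-\ep$, whereas you correctly flag that a naive union bound over the $N$ samples would degrade the probability (or force the constant to grow with $N$) and propose a truncation--Markov argument on the averaged constants to keep $K$ uniform in $N$.
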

\end{mdframed}

\medskip

\begin{Remark}
We point out that the convergence rate with respect to the mesh parameter $h$ is only $1/4$. Under the assumption that numerical density and temperature are bounded from below and above,  the discrete semi-norm of the velocity gradient is not bounded, which limits the convergence rate, see \cite{BLMSY} for more details.
\end{Remark}

\begin{proof}
Here we only prove \eqref{EE-1} as \eqref{EE-2} can be done in the same way.
Proposition~\ref{PF2} gives us the estimates on the approximation error: for any $\ep > 0$, there exists $K = K(\ep)$ such that
\begin{equation*}
\prst \Bigg[  \left\| \big(\vrh^n, \vm_h^n, S_h^n \big) (t, \cdot) - \big(\vr^n, \vm^n, S^n \big) (t, \cdot)  \right\|_{L^2(\Td)}
\leq K( \Delta t^{1/2} + h^{1/4}) \Bigg]  \geq 1 - \ep \ \mbox{ for all } \ t \in [0,T].
\end{equation*}
Indeed, $K$ only depends on $\ep$, that is to say, it is independent of $n$. Consequently, we have: for any $\ep > 0$, there exists $K=K(\ep)$ such that
\begin{align*}
&\prst \Bigg[   \left\| \frac{1}{N} \sum_{n = 1}^N \bigg( \big(\vrh^n, \vm_h^n, S_h^n \big) (t, \cdot) - \big(\vr^n, \vm^n, S^n \big) (t, \cdot) \bigg) \right\|_{L^{2}(\Td;\R^{d+2})}  \leq K( \Delta t^{1/2} + h^{1/4} ) \Bigg]  \geq 1 - \ep.
\end{align*}
On the other hand, from \eqref{C3a} we get
\begin{align*}
&\prst \Bigg[   \left\| \frac{1}{N} \sum_{n = 1}^N  \big(\vr^n, \vm^n, S^n \big) (t, \cdot) - \expe{(\vr, \vm, S \big) (t, \cdot)}  \right\|_{W^{-k,2}(\Td;\R^{d+2})}  \aleq  N^{-1/2} \Bigg]  =1.
\end{align*}
Combining above two formula we finish the proof.
\end{proof}

\section{Numerical experiment}\label{num}
In this section we illustrate the obtained theoretical convergence results of the Monte Carlo FV method by means of numerical simulations.
First, we define the following errors:
\begin{itemize}
\item Error of mean value:
\begin{align}\label{E1}
&E_1(U_h) =  \frac1{M} \sum_{m=1}^M \left(  \left\| \frac{1}{N} \sum_{n = 1}^N U_h^{n,m} (T, \cdot)- \expe{ U (T, \cdot) }  \right\|_{L^p(\Td)}  \right),
\\ \label{E2}
& E_2(U_h) = \left[ \frac1{M} \sum_{m=1}^M \left( \left\| \frac{1}{N} \sum_{n = 1}^N  U_h^{n,m}(T, \cdot) - \expe{U(T, \cdot)}  \right\|^{p}_{L^{p}(\Td)}\right) \right]^{1/p};
\end{align}

\item Error of deviation:
\begin{align}\label{E3}
& E_3(U_h ) =
\frac1{M} \sum\limits_{m=1}^M \left(   \left\| \frac 1 N \sum\limits_{n=1}^N \Big| U_h^{n,m}(T, \cdot)   - \frac{1}{N}  \sum\limits_{l=1}^N U_h^{l,m}(T, \cdot)   \Big| \ -  \Dev{U(T, \cdot) }  \right\|_{L^p(\Td)}  \right),
\\ \label{E4}
& E_4(U_h) =
\left[ \frac1{M} \sum\limits_{m=1}^M \left( \left\| \frac 1 N \sum\limits_{n=1}^N \Big| U_h^{n,m}(T, \cdot)   - \frac{1}{N}  \sum\limits_{l=1}^N U_h^{l,m}(T, \cdot)   \Big| \ -  \Dev{U(T, \cdot)}   \right\|_{L^p(\Td)}^p  \right) \right]^{1/p}.
\end{align}
\end{itemize}
Here, $U \in \{ \vr, \vm, S \}$, and $U_h^{n,m}$ is the FV approximation obtained with the $m$-th realisation of the $n$-th copy of the random data. Note that the exact expectation and deviation are approximated by the numerical solutions on the finest grid (with $h_{ref}$) emanating from $N_S$ copies of random data
\begin{align*}
&\expe{U(t,x)} =  \frac1{N_S} \sum_{n=1}^{N_S}  U^{n}_{h_{ref}}(t,x),
\quad
\Dev{U(t, x)}  =  \frac1{N_S} \sum_{n=1}^{N_S}  \Bigg|U_{h_{ref}}^{n}(t,x) - \frac1{N_S} \sum_{n=1}^{N_S}  U_{h_{ref}}^{n}(t,x) \Bigg|.
\end{align*}

Further, in order to validate our theoretical results,  we present the statistical errors and the total errors by selecting different $h$ and $N$ inside \eqref{E1}--\eqref{E4}:
\begin{itemize}
\item Statistical errors with respect to $N$ for a fixed mesh size $h=h_{ref}$:
\begin{align*}
&
N = 10 \cdot 2^n, \quad n = 0,\dots,3.
\end{align*}
\item Total errors with respect to the pair $(h,1/N) = (h,\mathcal{O}(h))$:
\begin{align*}
& h= 2/(32 \cdot 2^n), \quad N = 10 \cdot 2^n, \quad n = 0,\dots,3.
\end{align*}
\end{itemize}
Note that instead of theoretical convergence results in probability, cf.~Theorems~\ref{FVT1}, \ref{FVT2}, we consider in our numerical simulations the convergence in expectation that is more convenient for  practical purpose. Indeed, $\prst$-a.s. convergence presented in Propositions~\ref{PS1}, \ref{PS2}, and the boundedness of the expected values  imply convergence in the expectation for sampled (exact) solution.
In the simulations, the parameter $\eps$ in the FV method \eqref{scheme} is set to $0.6$. The parameters in  \eqref{E1}--\eqref{E4} are taken as $p = 1,2$, $M=40$, and $N_S=1000$, $h_{ref}:= 2/(32 \cdot 2^4)$.
\medskip

We consider the initial data to be a random perturbation of a vortex on $[-1,1]^2$
\begin{align*}
& \vr_0(x) = 1 + 0.1 \cdot Y_1(\omega) \cos(2\pi (x_1+x_2)), \\
& \vu_0(x) = 0.1 \cdot (Y_2(\omega) , Y_3(\omega) )^t +  \begin{cases}
\left(\frac{ [1-\cos(4\pi |x|)] x_2}{|x|},\ -\frac{ [1-\cos(4\pi |x|)]x_1}{|x|} \right)^t, & \mbox{if}\  |x| < 0.5,\\
(0,0)^t, & \mbox{otherwise},
\end{cases} \\
& \vt_0(x) = 2 + 0.2 \cdot Y_4(\omega) \sin(2\pi (x_1+x_2)).
\end{align*}
Other parameters in the Navier--Stokes--Fourier equations \eqref{i1}--\eqref{i5} are taken as
\begin{align*}
\vc{g} \equiv 0, \quad \tilde{\mu} = \mu + 10^{-4} \cdot Y_5(\omega), \quad
\tilde{\lambda} = \lambda + 10^{-4} \cdot Y_6(\omega), \quad
\tilde{\kappa} = \kappa + 10^{-4} \cdot Y_7(\omega)
\end{align*}
with $\mu =  \lambda = \kappa = 0.001$ and $Y_j(\omega) \ \overset{i.i.d.}{\sim}\ \mathcal{U}\left( -1, 1\right),\, j = 1,\dots, 7$.

Figure \ref{fig1} displays the mean and deviation of the numerical solutions $\vr, \vm, S$, as well as the zoom-in along the line $x=y$, obtained with the Monte Carlo FV method at $T = 0.1$. The statistical errors and total errors of $\vr,\vm,S, \vu, \vt$ in the $L^1$-norm, i.e.~$p = 1$ in \eqref{E1}-\eqref{E4}, are presented in Figure \ref{fig2} and \ref{fig3}, respectively.
Corresponding results in the $L^2$-norm are shown in Figure \ref{fig4} and \ref{fig5}, respectively.

The numerical results confirm that the statistical errors of the means of $\vr,\vm, S$, and their deviations converge with the rate $N^{-1/2}$. Further, the total errors obtained with the parameter pair $(h,1/N) = (h,\mathcal{O}(h))$ converge with a rate belonging to $[1/2, 1]$. 
This numerical experiment indicates a higher convergence rate than those obtained in theoretical part. Note however that theoretical convergence rates were obtained for general data.

\begin{figure}[htbp]
	\setlength{\abovecaptionskip}{0.cm}
	\setlength{\belowcaptionskip}{-0.cm}
	\centering
	\begin{subfigure}{0.32\textwidth}
		\includegraphics[width=\textwidth]{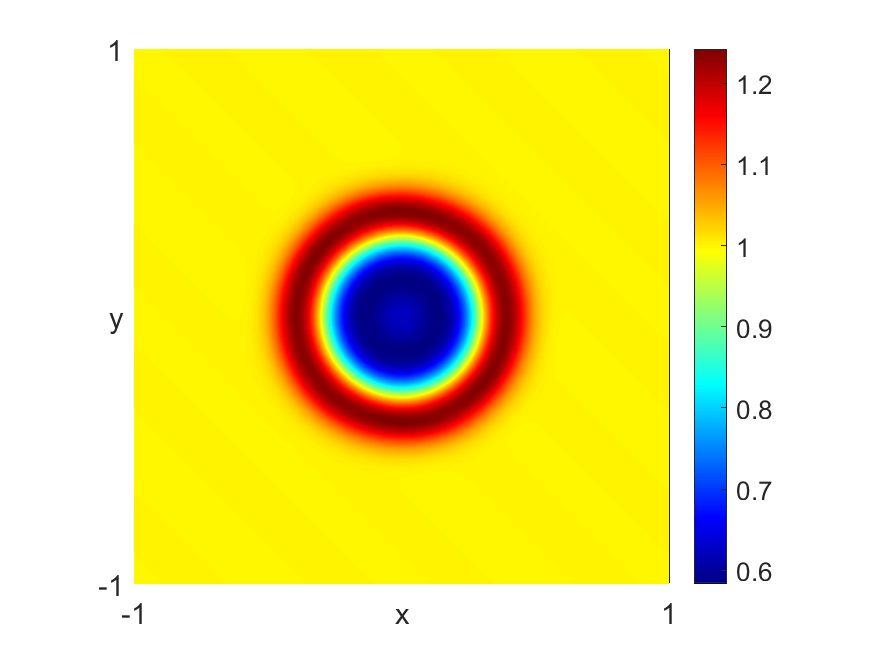}
		\caption{ \bf $\vr$ - Mean}
	\end{subfigure}	
	\begin{subfigure}{0.32\textwidth}
		\includegraphics[width=\textwidth]{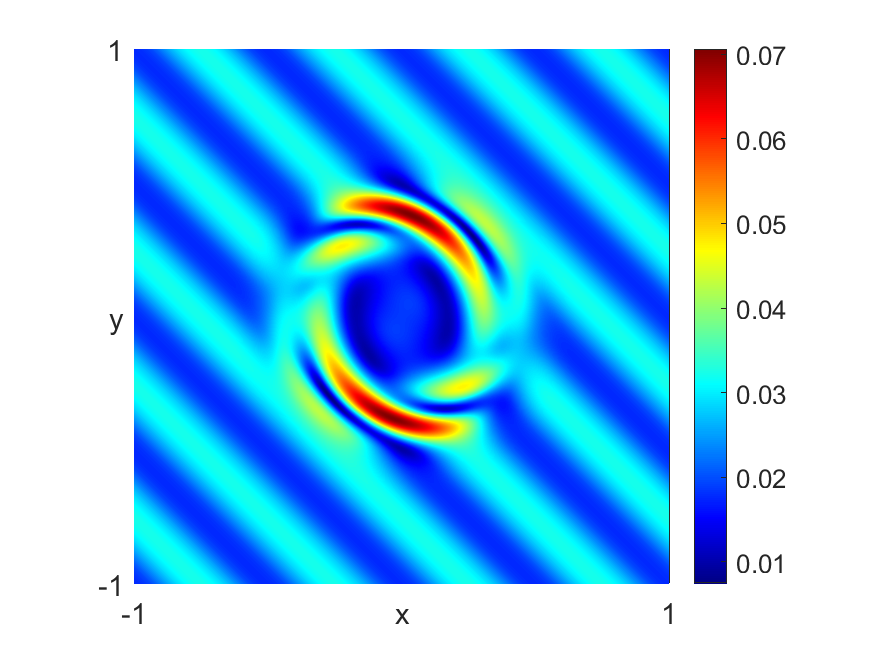}
		\caption{ \bf $\vr$ - Deviation }
	\end{subfigure}	
	\begin{subfigure}{0.32\textwidth}
		\includegraphics[width=\textwidth]{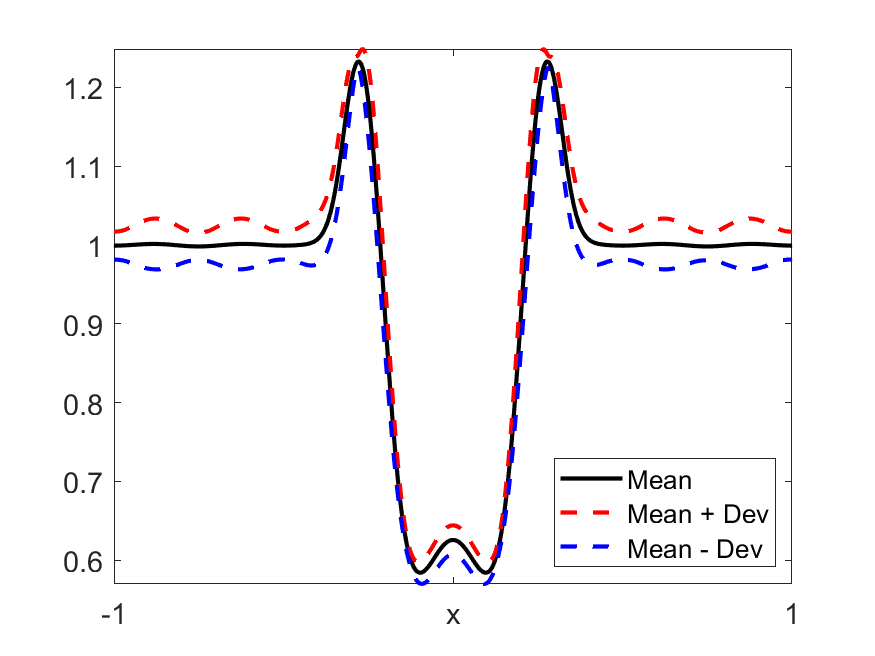}
		\caption{ \bf $\vr$ }
	\end{subfigure}	\\		
	\begin{subfigure}{0.32\textwidth}
		\includegraphics[width=\textwidth]{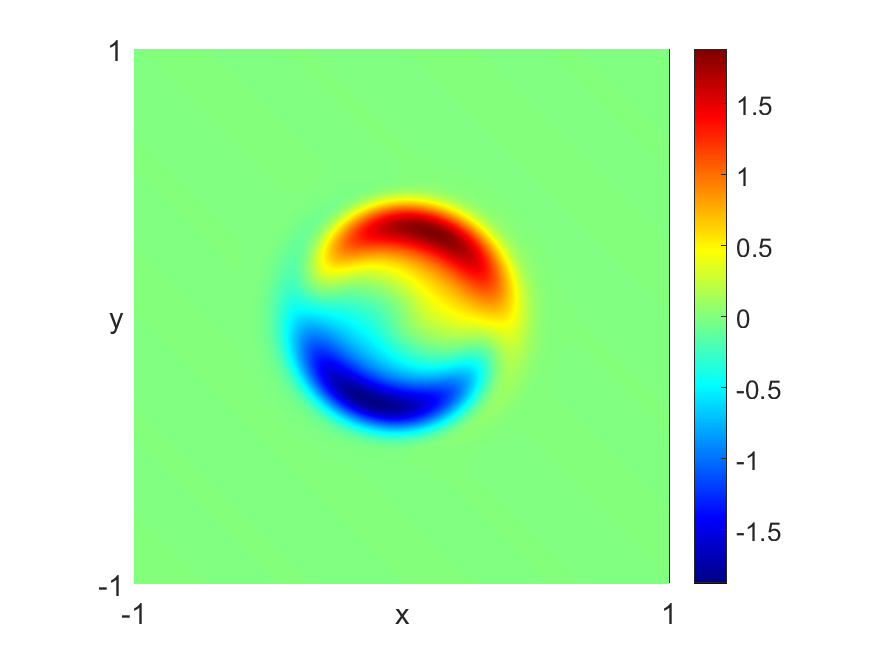}
		\caption{ \bf $m_1$ - Mean}
	\end{subfigure}	
	\begin{subfigure}{0.32\textwidth}
		\includegraphics[width=\textwidth]{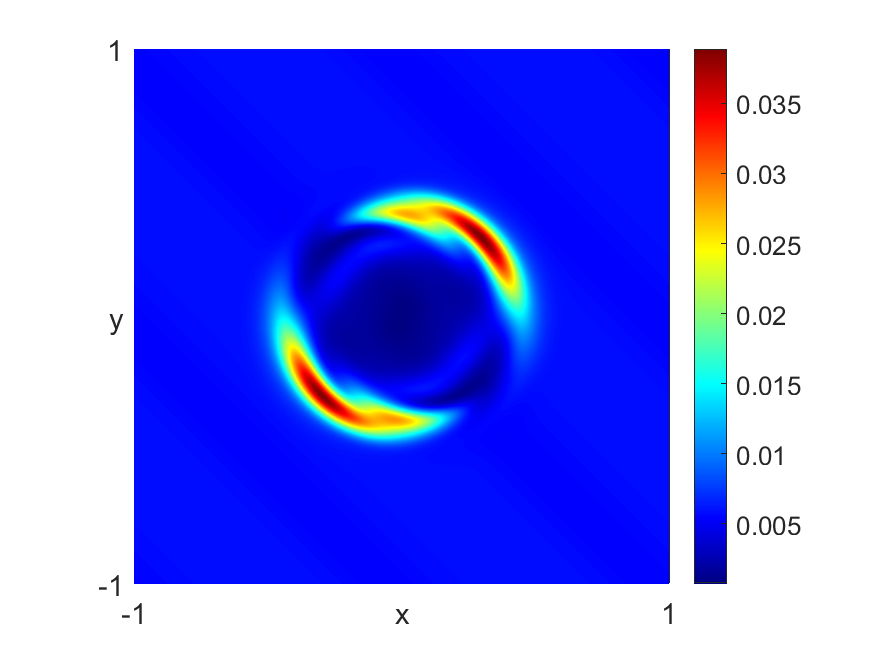}
		\caption{ \bf $m_1$ - Deviation}
	\end{subfigure}	
	\begin{subfigure}{0.32\textwidth}
		\includegraphics[width=\textwidth]{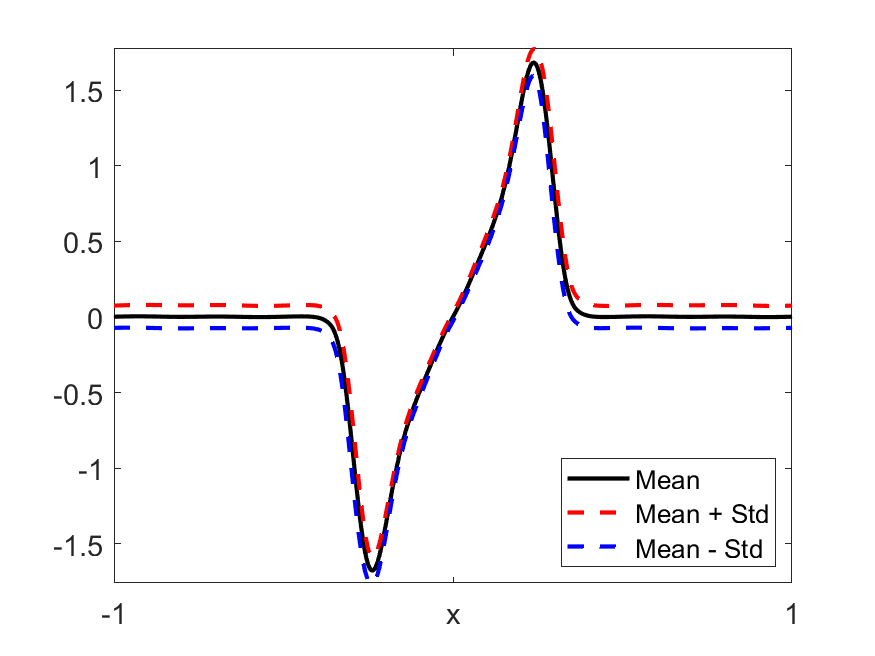}
		\caption{ \bf $m_1$}
	\end{subfigure}	\\
	\begin{subfigure}{0.32\textwidth}
		\includegraphics[width=\textwidth]{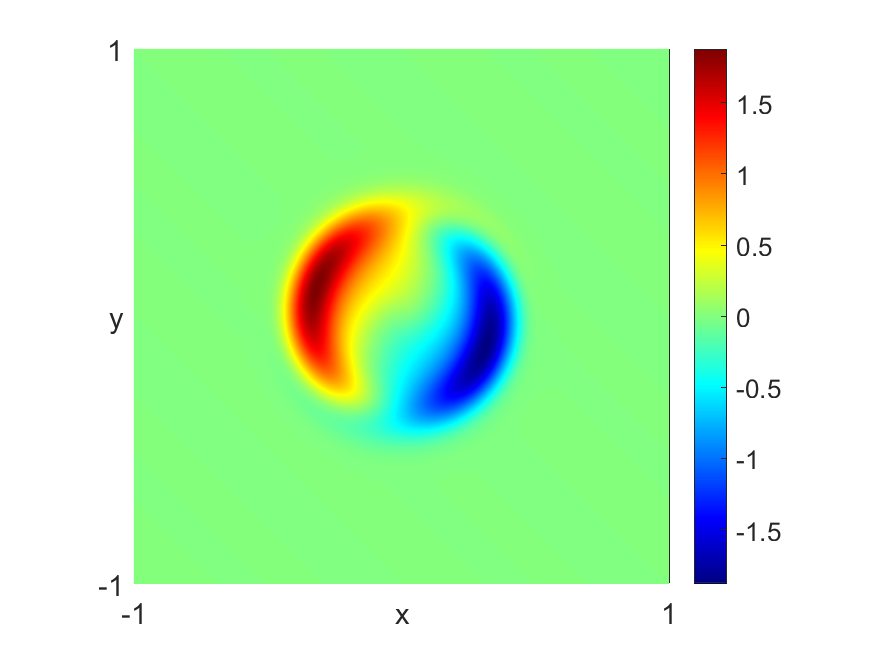}
		\caption{ \bf $m_2$ - Mean}
	\end{subfigure}	
	\begin{subfigure}{0.32\textwidth}
		\includegraphics[width=\textwidth]{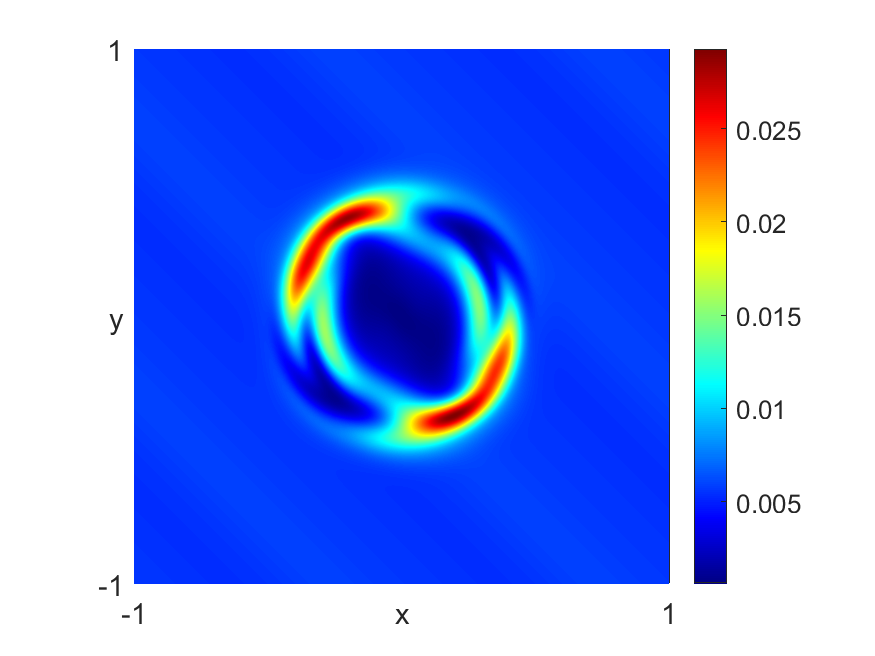}
		\caption{ \bf $m_2$ - Deviation}
	\end{subfigure}	
	\begin{subfigure}{0.32\textwidth}
		\includegraphics[width=\textwidth]{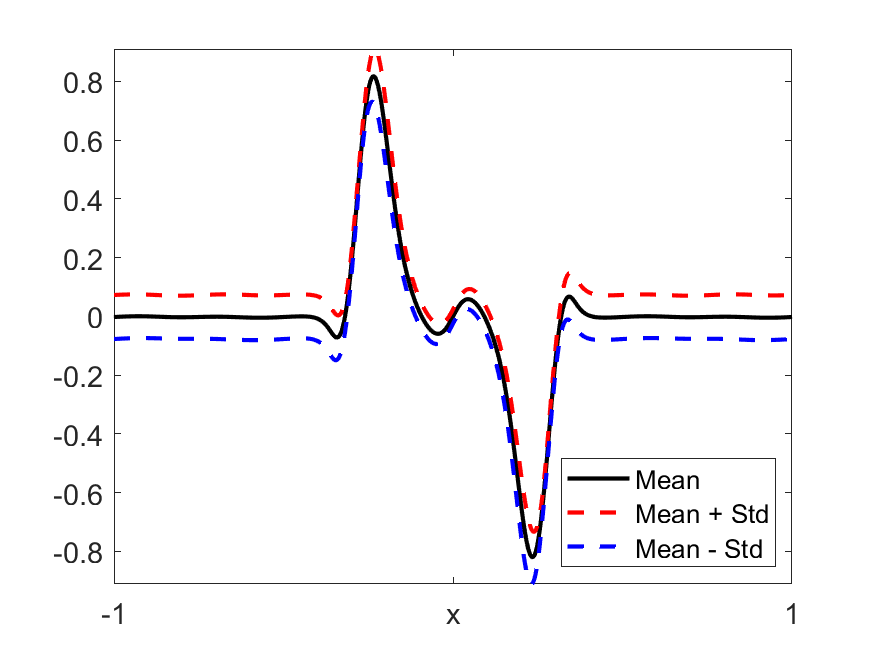}
		\caption{ \bf $m_2$}
	\end{subfigure}	\\
	\begin{subfigure}{0.32\textwidth}
		\includegraphics[width=\textwidth]{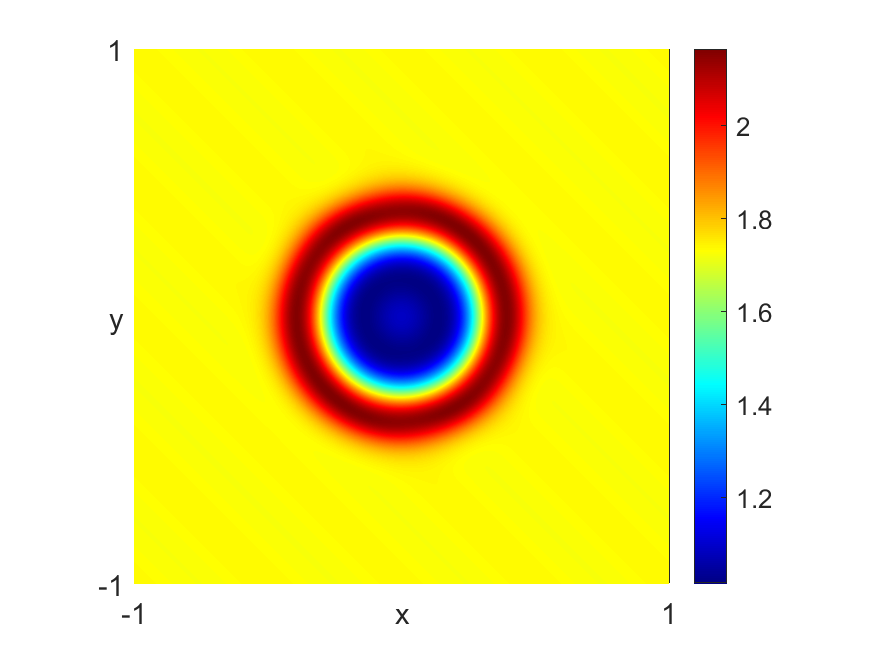}
		\caption{ \bf $S$ - Mean}
	\end{subfigure}	
	\begin{subfigure}{0.32\textwidth}
		\includegraphics[width=\textwidth]{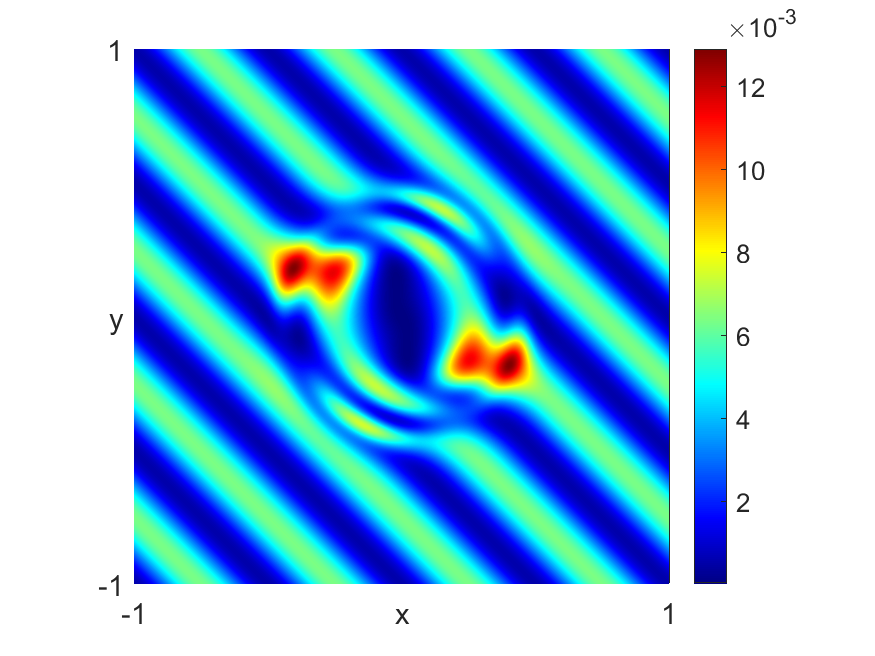}
		\caption{ \bf $S$ - Deviation}
	\end{subfigure}	
	\begin{subfigure}{0.32\textwidth}
		\includegraphics[width=\textwidth]{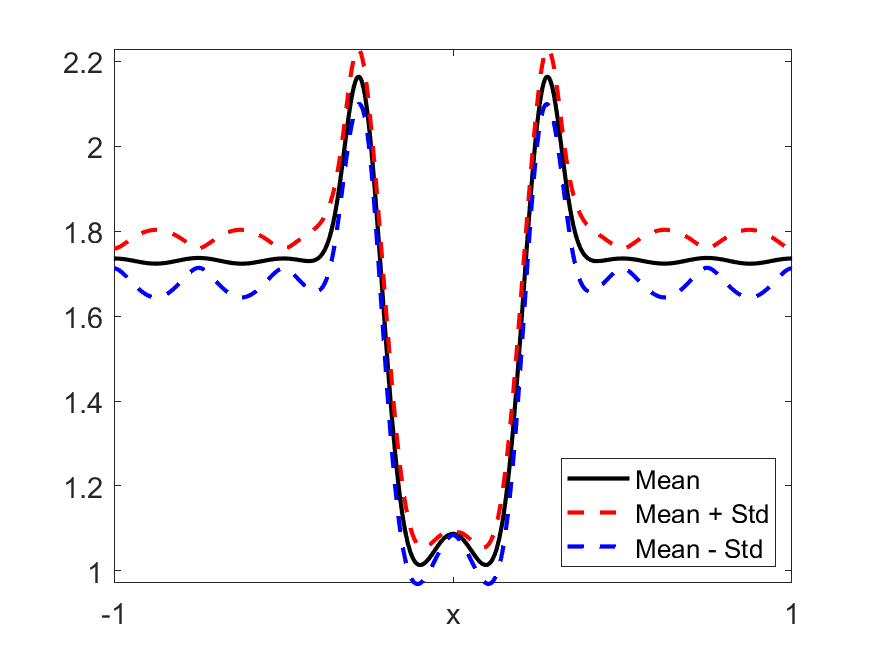}
		\caption{ \bf $S$}
	\end{subfigure}	
	\caption{  \small{Vortex Experiment: {\bf Numerical solutions} obtained by the Monte Carlo FV method on a mesh with $512^2$ cells. Left: Mean-value of $\vr, \vm, S$; Middle: Deviation of $\vr, \vm, S$; Right:  $\vr, \vm, S$ at the line $x = y$.}}\label{fig1}
\end{figure}

\begin{figure}[htbp]
	\setlength{\abovecaptionskip}{0.cm}
	\setlength{\belowcaptionskip}{-0.cm}
	\centering
	\includegraphics[width=0.49\textwidth]{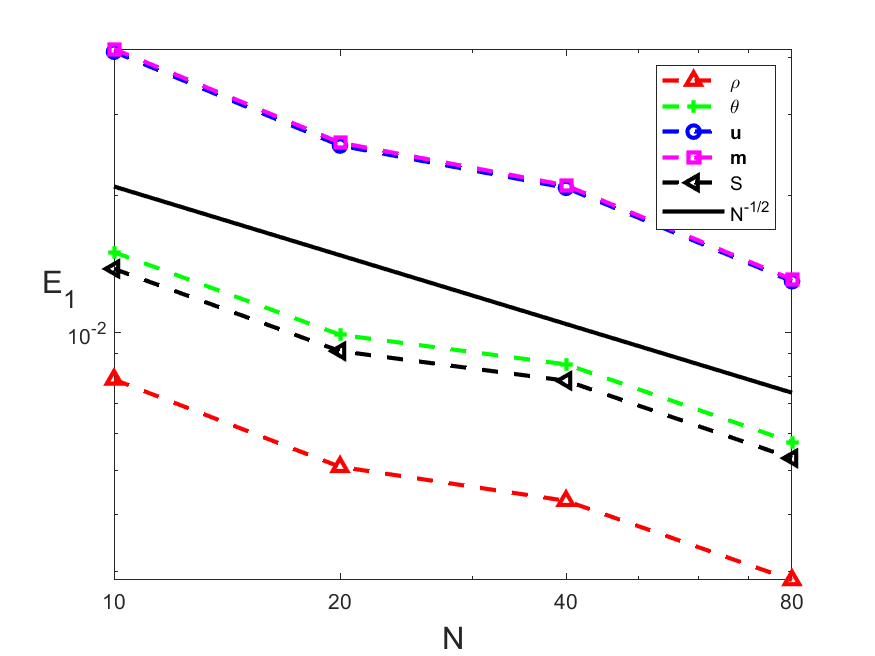}
		\includegraphics[width=0.49\textwidth]{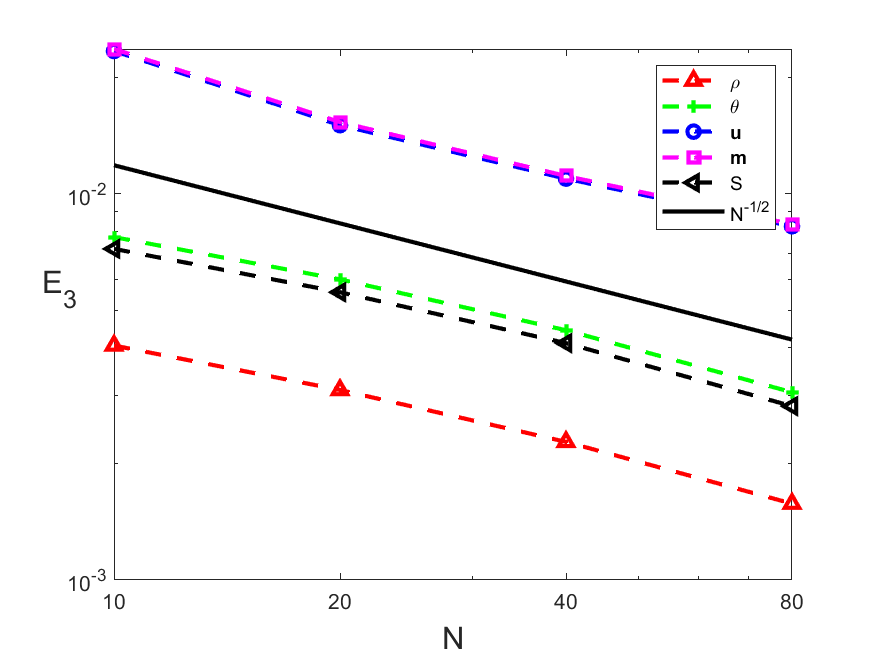}
	\
	\caption{  \small{Vortex Experiment:  {\bf Statistical errors} of the mean $E_1$, ($E_1=E_2$) and the deviation $E_3$, ($E_3=E_4$) in $L^1$-norm obtained with a fine mesh parameter $h_{ref} = 2/512$ and different $N = 10 \cdot 2^n, n = 0, \dots, 3$.
	The black solid lines without any marker denote the reference slope of $N^{-1/2}$.  }}\label{fig2}
\end{figure}

\begin{figure}[htbp]
	\setlength{\abovecaptionskip}{0.cm}
	\setlength{\belowcaptionskip}{-0.cm}
	\centering
		\includegraphics[width=0.49\textwidth]{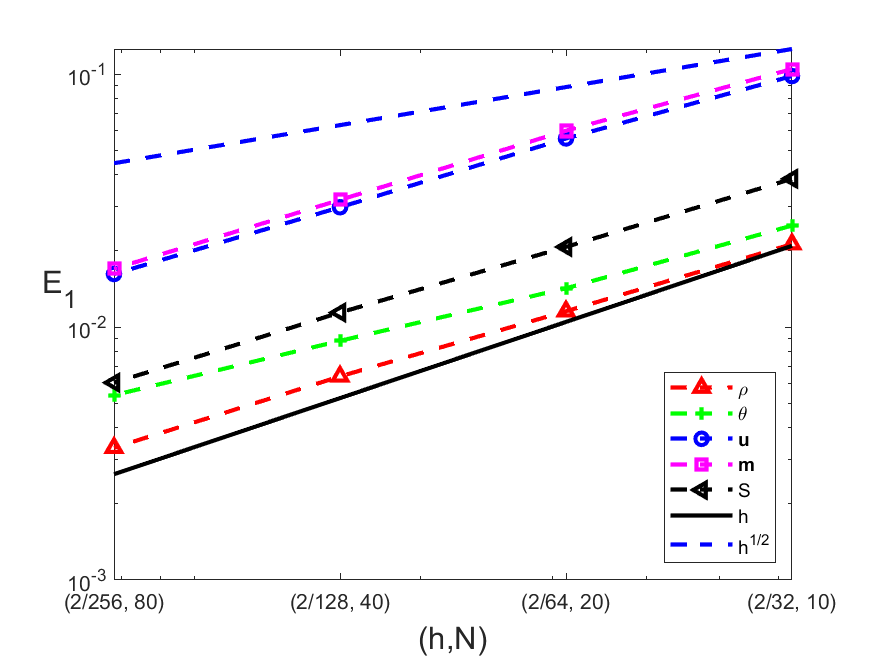}
		\includegraphics[width=0.49\textwidth]{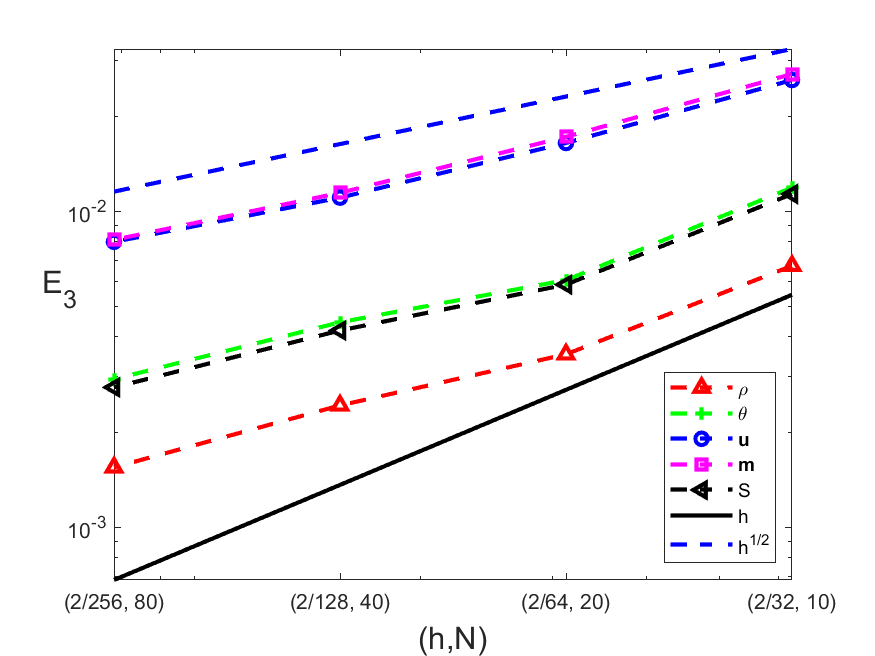}
		\
	\caption{  \small{Vortex Experiment:   {\bf Total errors} of the mean $E_1$, ($E_1=E_2$) and the deviation $E_3$, ($E_3=E_4$) in $L^1$-norm with $h = 2/(32 \cdot 2^n), N = 10 \cdot 2^n, n = 3, \dots, 0$.  The blue dashed and black solid lines without any marker denote the reference slope of $h^{1/2}$ and $h$, respectively. }}\label{fig3} 
\end{figure}

\section{Conclusion}\label{sec_con}

In 1958 J.~Nash advocated in his pioneering work \cite{Nash} that boundedness of temperature and density can be a suitable criterion to prove (conditional) existence, smoothness and uniqueness for flow equations. Indeed, this hypothesis has been recently rigorously proved
for  heat conductive viscous compressible fluids governed by the Navier--Stokes--Fourier system by Feireisl, Wen, and Zhu \cite{FeiWenZhu}, see
also Basari\'c, Feireisl, Mizerov\'a~\cite{BFM} for the case of Dirichlet boundary conditions.
%
%
These results indicate that the regularity of the Navier--Stokes--Fourier solutions is a generic property. Consequently, we assume in this paper that solutions with a possible blow-up are statistically insignificant. Under a rather weak assumption that numerical densities and temperatures are bounded in probability, cf.~\eqref{bip}, we perform
statistical analysis of the random compressible Navier--Stokes--Fourier equations. Numerical solutions are obtained by means of the Monte Carlo method coupled with a suitable structure-preserving, consistent and stable numerical method for space-time  discretisation. In particular, we choose viscous finite volume method~\eqref{scheme}, but
any consistent and stable deterministic method can be used as well.

In Theorem~\ref{FVT1} we have proved  the convergence of the Monte Carlo estimators for the expectation and deviation. The convergence proof of finite volume solutions in random space is nontrivial and requires intrinsic stochastic
compactness arguments, such as  the Skorokhod representation theorem
and the Gy\"ongy–Krylov theorem.
In Theorem~\ref{FVT2} we present the error estimates of the Monte Carlo FV approximations for the expectation and deviation.
As far as we are aware the present results are the first rigorous convergence and error analysis results for the Monte Carlo method applied to
heat conductive viscous compressible flows.
The numerical experiment presented in Section~\ref{num} illustrates theoretical results.

\section*{Acknowledgements}
The authors sincerely thank E.~Feireisl (Prague) for stimulating discussions.

\def\cprime{$'$} \def\ocirc#1{\ifmmode\setbox0=\hbox{$#1$}\dimen0=\ht0
  \advance\dimen0 by1pt\rlap{\hbox to\wd0{\hss\raise\dimen0
  \hbox{\hskip.2em$\scriptscriptstyle\circ$}\hss}}#1\else {\accent"17 #1}\fi}

\appendix

\section{Some basic statistical results}
In this section we recall some basic theories in statistical analysis from Ledoux and Talagrad \cite{LedTal}.
\begin{Lemma}[Corollary 7.10 of \cite{LedTal}]\label{LT710}
Let $X$ be a Borel random variable with values in a separable Banach space $B$. Let $(X_i)_{i\in \mathbb N}$ be a sequence of independent copies of $X$. Denote $S_N=X_1+\dots+X_N$ for $N\geq 1$.
Then
\[\frac{S_N}{N} \to 0 \mbox{ a.s. }
\]
if and only if $\Expec{ \norm{X}} <\infty$ and $\Expec{X} =0$.
Note that ``a.s." stands for ``almost surely".
\end{Lemma}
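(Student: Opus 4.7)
The plan is to prove the equivalence by establishing necessity and sufficiency separately, with the sufficiency direction being Mourier's classical strong law of large numbers for Banach-space-valued random variables.

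For the necessity direction, I would first observe that the telescoping identity $X_N/N = S_N/N - \frac{N-1}{N}\, S_{N-1}/(N-1)$ forces $\|X_N\|/N \to 0$ almost surely. Since the sequence $(X_n)$ is independent, this a.s.\ convergence combined with the second Borel--Cantelli lemma applied to the events $\{\|X_N\| > N\}$ yields $\sum_N \mathbb{P}(\|X\| > N) < \infty$, which is equivalent to $\mathbb{E}\|X\| < \infty$ by the layer-cake formula. Once integrability is secured, one applies the sufficiency direction to the centred variable $X - \mathbb{E}[X]$ to obtain $S_N/N \to \mathbb{E}[X]$ a.s.; the assumed limit being $0$ then forces $\mathbb{E}[X] = 0$.

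For the sufficiency direction, the strategy is a standard $\varepsilon$-approximation that exploits the separability of $B$ to reduce to the finite-dimensional (hence real-valued) strong law. Given $\varepsilon > 0$, Bochner integrability in the separable Banach space $B$ provides a simple random variable $Y$ taking values in a finite-dimensional subspace $F \subset B$ with $\mathbb{E}\|X - Y\| < \varepsilon$. Splitting $S_N(X)/N = S_N(Y)/N + S_N(X - Y)/N$, and noting that $\|\mathbb{E}[Y]\| \leq \mathbb{E}\|X - Y\| < \varepsilon$ since $\mathbb{E}[X] = 0$, I would control each piece: the $F$-valued average converges to $\mathbb{E}[Y]$ a.s.\ by applying Kolmogorov's real-valued SLLN componentwise in any basis of $F$; the residual is dominated via the real SLLN applied to the non-negative random variables $\|X_n - Y_n\|$, yielding $\|S_N(X-Y)\|/N \leq \frac{1}{N}\sum_n \|X_n - Y_n\| \to \mathbb{E}\|X - Y\| < \varepsilon$ almost surely. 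Combining the three bounds gives $\limsup_N \|S_N/N\| \leq 2\varepsilon$ a.s., and letting $\varepsilon$ run through a countable sequence tending to $0$ concludes the proof.

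The main obstacle is the sufficiency step, since that is where the geometry of $B$ enters. The argument hinges on the fact that any Bochner integrable random variable in a separable Banach space is approximable in $L^1$ by simple, finite-rank random variables, which is what allows the reduction to the real-valued SLLN without requiring any Rademacher type or B-convexity assumption on $B$. The necessity direction is essentially a consequence of Borel--Cantelli and is comparatively straightforward.
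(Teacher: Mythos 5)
Your argument is correct, but be aware that the paper itself offers no proof of this statement: Lemma~\ref{LT710} is quoted verbatim as Corollary~7.10 of Ledoux--Talagrand and used as a black box, so there is no internal argument to compare against. In the cited book the corollary is deduced from a more general Kolmogorov-type strong law (their Theorem~7.9), whose proof goes through symmetrization and maximal-inequality machinery; what you give instead is the classical, self-contained Mourier argument, which is more elementary and makes transparent that no type or cotype hypothesis on $B$ is needed here, only separability and Bochner integrability. Both directions of your proof are sound: the necessity step via the telescoping identity, the second Borel--Cantelli lemma applied to the independent events $\{\|X_N\|>N\}$, and the layer-cake equivalence $\sum_N \mathbb{P}(\|X\|>N)<\infty \Leftrightarrow \mathbb{E}\|X\|<\infty$ is the standard route; the sufficiency step via $L^1$-approximation by a simple finite-rank $Y$, the finite-dimensional strong law for $S_N(Y)/N$, and the scalar strong law for $\frac1N\sum_n\|X_n-Y_n\|$ closes the argument with $\limsup_N\|S_N/N\|\le 2\varepsilon$ a.s.\ for each $\varepsilon$ in a countable null sequence. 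One detail worth stating explicitly: the approximant must be taken of the form $Y=\phi(X)$ for a simple Borel map $\phi$, so that $Y_n:=\phi(X_n)$ is coupled to $X_n$ and the pairs $(X_n,Y_n)$ remain i.i.d.; otherwise the quantities $\|X_n-Y_n\|$ and the limit $\mathbb{E}\|X-Y\|$ in your residual estimate are not well defined. With that understood, the proof is complete.
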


\begin{Lemma}[Proposition 9.11 of \cite{LedTal}]\label{LT911}
Let $B$ be a Banach space of type $p$ and cotype $q$ with type constant $C_1$ and cotype constant $C_2$. Then, for every finite sequence $(X_i)$of independent mean zero Radon random variables in $L_p(B)$ (resp. $L_q(B)$),
\[ \Expec{ \norm{\sum_i X_i}^p} \leq (2C_1)^p \sum_i \Expec{\norm{ X_i}^p} \mbox{ and }
\Expec{\norm{\sum_i X_i}^q} \geq (2C_2)^{-q} \sum_i \Expec{\norm{ X_i}^q}.
\]
\end{Lemma}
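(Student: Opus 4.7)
The plan is to reduce the bound for independent mean-zero random variables to the defining inequality of type/cotype for Rademacher averages, via a standard symmetrization argument. Recall that $B$ has type $p$ with constant $C_1$ means
\[
\mathbb{E}\Big\|\sum_i \varepsilon_i x_i\Big\|^p \leq C_1^p \sum_i \|x_i\|^p
\]
for every finite sequence $(x_i) \subset B$ and every sequence $(\varepsilon_i)$ of independent Rademacher signs, and cotype $q$ with constant $C_2$ is the reverse inequality with $C_2^{-q}$.

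First I would handle the type-$p$ upper bound. Let $(X_i')$ be an independent copy of $(X_i)$ defined on an enlarged probability space, and let $(\varepsilon_i)$ be Rademacher variables independent of everything. Since $\mathbb{E} X_i' = 0$, I can write $\sum_i X_i = \sum_i X_i - \mathbb{E}'\sum_i X_i'$, so by Jensen's inequality applied under the conditional expectation $\mathbb{E}'$,
\[
\mathbb{E}\Big\|\sum_i X_i\Big\|^p \leq \mathbb{E}\,\mathbb{E}'\Big\|\sum_i(X_i - X_i')\Big\|^p.
\]
Since the differences $X_i - X_i'$ are symmetric and independent, the joint law of $(X_i - X_i')$ coincides with that of $(\varepsilon_i(X_i - X_i'))$, so the right-hand side equals $\mathbb{E}\|\sum_i \varepsilon_i(X_i - X_i')\|^p$. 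Conditioning on $(X_i, X_i')$ and invoking the type-$p$ definition gives
\[
\mathbb{E}_{\varepsilon}\Big\|\sum_i \varepsilon_i(X_i - X_i')\Big\|^p \leq C_1^p \sum_i \|X_i - X_i'\|^p,
\]
then taking expectation and using the elementary bound $\|X_i - X_i'\|^p \leq 2^{p-1}(\|X_i\|^p + \|X_i'\|^p)$ (or alternatively the triangle inequality in $L^p(B)$) yields $\mathbb{E}\|\sum_i X_i\|^p \leq (2C_1)^p \sum_i \mathbb{E}\|X_i\|^p$.

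For the cotype-$q$ lower bound, the symmetrization goes in the opposite direction. Using the same auxiliary variables, Jensen gives
\[
\mathbb{E}\Big\|\sum_i \varepsilon_i X_i\Big\|^q = \mathbb{E}\Big\|\mathbb{E}'\sum_i \varepsilon_i(X_i - X_i')\Big\|^q \leq \mathbb{E}\Big\|\sum_i \varepsilon_i(X_i - X_i')\Big\|^q,
\]
and again by symmetry of $\varepsilon_i(X_i - X_i')$ the last quantity equals $\mathbb{E}\|\sum_i(X_i - X_i')\|^q \leq 2^q\,\mathbb{E}\|\sum_i X_i\|^q$. On the other hand, conditioning on $(X_i)$ and applying the cotype-$q$ definition gives
\[
\mathbb{E}_\varepsilon \Big\|\sum_i \varepsilon_i X_i\Big\|^q \geq C_2^{-q} \sum_i \|X_i\|^q,
\]
whence $\mathbb{E}\|\sum_i X_i\|^q \geq (2C_2)^{-q} \sum_i \mathbb{E}\|X_i\|^q$ after taking expectations and rearranging.

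The only nontrivial points are bookkeeping: I must check that $X_i, X_i', \varepsilon_i$ can be placed on a common probability space so Fubini applies, and that all norms are measurable (automatic since the variables are Radon and $B$ is separable in the support). The conceptual core is the two symmetrization inequalities — $\mathbb{E}\|\sum X_i\|^{p} \leq \mathbb{E}\|\sum\varepsilon_i(X_i-X_i')\|^p$ for the upper bound and $\mathbb{E}\|\sum \varepsilon_i X_i\|^q \leq 2^q \mathbb{E}\|\sum X_i\|^q$ for the lower bound — so that the Rademacher-average type/cotype inequality can be applied conditionally. The main obstacle, such as it is, is simply keeping the direction of the Jensen step straight in the cotype case, since there one wants the Rademacher sum to be dominated, rather than to dominate, the original independent sum.
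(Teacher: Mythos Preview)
Your argument is correct and is exactly the standard symmetrization proof found in Ledoux--Talagrand, Proposition~9.11. Note, however, that the paper does not supply its own proof of this lemma: it is merely quoted from \cite{LedTal} as a background tool, so there is no ``paper's proof'' to compare against beyond the cited reference itself.
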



\begin{Lemma}[Theorem 10.5 of \cite{LedTal}]\label{LT105}
Let $X$ be a mean zero random variable such that
$\Expec{\norm{X}^2} < \infty$
  with values in a separable Banach space $B$ of type 2. Then $X$ satisfies the central limit theorem. Conversely, if in a (separable) Banach space $B$, every random variable $X$ such that $\Expec{ X} =0$ and $\Expec{\norm{X}^2} < \infty$  satisfies the central limit theorem, then $B$ must be of type 2.
\end{Lemma}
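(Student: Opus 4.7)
The plan is to prove the two implications of this characterization separately, following the classical route through moment inequalities and finite-dimensional approximation. The forward direction exploits the type 2 moment inequality (already stated as Lemma \ref{LT911}) to control infinite-dimensional tails of the normalized partial sums, while the converse extracts a type 2 inequality from the universal validity of the CLT by means of uniform integrability.

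For the forward direction (type 2 implies CLT), first I would recall that in a type 2 space there is a constant $C_1$ such that for every finite sequence of independent mean zero $Y_i \in L^2(B)$,
$$\Expec{\norm{\textstyle\sum_i Y_i}^2} \leq C_1^2 \sum_i \Expec{\norm{Y_i}^2}.$$
Using separability of $B$ and $\Expec{\norm{X}^2}<\infty$, for every $\varepsilon>0$ I would construct a finite-dimensional subspace $F\subset B$ and a conditional-expectation-type projection producing a mean zero random variable $X^F$ with values in $F$ such that $\Expec{\norm{X-X^F}^2}<\varepsilon$. Writing $S_n/\sqrt{n} = S_n^F/\sqrt{n} + (S_n-S_n^F)/\sqrt{n}$, the classical multidimensional CLT applied on $F$ shows that $S_n^F/\sqrt n$ converges in law to a Gaussian measure $\gamma^F$ on the finite-dimensional subspace $F$, which extends to a Gaussian measure on $B$. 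For the remainder, the type 2 bound applied to $Y_i = (X_i-X_i^F)/\sqrt n$ gives
$$\Expec{\norm{(S_n-S_n^F)/\sqrt n}^2} \leq C_1^2\,\Expec{\norm{X-X^F}^2} < C_1^2\varepsilon$$
uniformly in $n$. Combining this uniform tail control with the finite-dimensional CLT, tightness of $\{S_n/\sqrt n\}$ follows from a standard Prokhorov argument, and the family $\{\gamma^F\}_F$ can be shown to be Cauchy in the Lévy–Prokhorov metric, so it converges to a Gaussian measure $\gamma$ on $B$, which must be the weak limit of $S_n/\sqrt n$. This is precisely the CLT for $X$.

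For the converse, I would argue contrapositively in spirit: given $x_1,\dots,x_N \in B$, I would define the symmetric mean zero random variable $X$ that equals $\sqrt N\,\varepsilon_i x_i$ with probability $1/(2N)$ for each $i$ and each sign, where $\varepsilon_i$ is a Rademacher sign. Then $\Expec{\norm{X}^2}=\sum_i \norm{x_i}^2<\infty$ and $\Expec{X}=0$. By hypothesis $S_n/\sqrt n$ converges in law to some Gaussian $G$ on $B$. The critical additional step is to establish that $\sup_n \Expec{\norm{S_n/\sqrt n}^2}<\infty$, i.e.~uniform integrability of the squared norms along the CLT sequence. Once this is in hand, one passes to the limit to obtain $\Expec{\norm{G}^2}\leq C\,\Expec{\norm{X}^2}$ with $C$ independent of the choice of $x_i$. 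Comparing the covariance of $G$ to a Gaussian realization of $\sum_i g_i x_i$ via a contraction principle then produces the inequality
$$\Expec{\norm{\textstyle\sum_i \varepsilon_i x_i}^2} \leq C'\,\sum_i \norm{x_i}^2,$$
which is the defining property of type 2.

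The main obstacle is the uniform integrability in the converse: convergence in law alone does not transfer second-moment bounds in an infinite-dimensional space, and there is no Lebesgue domination directly available. The standard resolution, and the technical heart of the argument in Ledoux–Talagrand, is a closed-graph/Banach–Steinhaus style argument: one shows that the assignment $X \mapsto \mathcal{L}(\lim_n S_n/\sqrt n)$ is a well-defined closed linear map from $L^2(B;\text{mean zero})$ into the space of pregaussian measures on $B$ with finite second moment, and continuity of this map yields the desired uniform bound. This step relies on the Itô–Nisio theorem and on basic integrability properties of Gaussian measures on Banach spaces, and is by far the most delicate part of the proof; the rest reduces to bookkeeping via the contraction and symmetrization principles.
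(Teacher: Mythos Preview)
The paper does not prove this lemma at all: it is stated in the appendix as a direct citation of \cite[Theorem~10.5]{LedTal}, with no argument given. There is therefore nothing to compare your proposal against in the paper itself.

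That said, your outline is essentially the classical Ledoux--Talagrand argument. The forward direction via finite-dimensional approximation and the type~2 moment inequality is the standard route and your sketch is accurate. For the converse, your identification of the closed-graph argument as the key step is correct, and your honest flagging of the uniform-integrability issue as the main obstacle is apt. The specific construction you propose (taking $X=\sqrt{N}\,\varepsilon_i x_i$ uniformly over $i$) is a reasonable heuristic but is not quite how the argument is executed in \cite{LedTal}; there one works more abstractly with the map from pregaussian covariances to Gaussian measures and applies the closed-graph theorem directly to that operator, rather than building a single auxiliary random variable from the $x_i$. Since the paper treats this as a black-box citation, no proof is required here.
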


\section{Finite volume method}
\label{FV}
For completeness we present the finite volume method proposed in \cite{FLMS_FVNSF}. Let the physical domain $\Td$ be divided into structured control volumes (cuboids for simplicity)
$\Td  = \bigcup_{K \in \grid_h} K.$
The finite volume approximation $(\vr_{h}, \vu_{h}, \vth)$ is defined as a piecewise constant in space and time function that solves the following nonlinear algebraic system:
\begin{subequations}\label{scheme}
\setlength\belowdisplayskip{0pt}
 \setlength\abovedisplayskip{0pt}
\begin{equation}\label{scheme_D}
\intTd{ D_t \vrh  \phi_h } - \intfacesint{  \Fup (\vrh ,\vuh ) \jump{\phi_h}   } = 0 \quad \mbox{for all}\ \phi_h \in Q_h,
\end{equation}
\begin{multline}
\label{scheme_M}
\intTd{ D_t  (\vrh  \vuh ) \cdot \bfphi_h } - \intfacesint{ \Fup  (\vrh  \vuh ,\vuh ) \cdot \jump{\bfphi_h}   }
\\ +\intTd{ (\bS_h - p_h \I):\Gradh \bfphi_h } = \intTd{\vrh  \vc{g} \cdot \bfphi_h}
\quad \mbox{for all } \bfphi_h \in \vQh,
\end{multline}
\begin{multline} \label{scheme_T}
c_v\intTd{ D_t (\vrh  \vth ) \phi_h } - c_v\intfacesint{  \Fup (\vrh \vth ,\vuh )\jump{\phi_h} }
+\intfacesint{  \frac{\kappa}{ h } \jump{\vth}  \jump{ \phi_h}  }
\\
= \intTd{  (\bS_h-p_h \I ):\Gradh \vuh  \phi_h}, \quad \mbox{for all}\ \phi_h \in Q_h ,
\end{multline}
\end{subequations}
where $Q_h$ is the space of piecewise constant functions on $\grid_h$, $\vQh=Q_h^d$,
\begin{align*}
p_h(t)&= \vrh(t) \vth(t)  \quad \mbox{ for } \vth(t) >0  \quad  \mbox{ and } \quad p_h(t) = 0 \quad \mbox{ for } \vth(t) \leq 0,
\\
\Fup (r_h,\vuh)
&=\Up[r_h, \vuh] - \muh \jump{ r_h }, \;\eps \in (-1,1),
\\
\Up [r_h, \vuh]
&= \avs{r_h} \ \avs{\vuh} \cdot \vn - \frac{1}{2} |\avs{\vuh} \cdot \vn| \jump{r_h},
\\
\bS_h &= 2 \mu \Dhuh  + \lambda  \Divh   \vuh \I , \quad \Dhuh = (\Gradh \vu_h+\Gradh^T \vu_h)/2,
\\ \Gradh r_h & = \sum_{K \in \grid_h}  (\Gradh r_h)_K 1_K,  \quad
(\Gradh r_h)_K = \sum_{\sigma\in \pd K} \frac{|\sigma|}{|K|} \avs{r_h} \vn \quad \mbox{for } r_h \in Q_h,
\\
\Divh \vvh  &= \sum_{K \in \grid_h}  (\Divh  \vvh)_K 1_K, \quad
(\Divh \vvh)_K = \sum_{\sigma\in \pd K} \frac{|\sigma|}{|K|} \avs{\vvh} \cdot \vn
\quad \mbox{for } \vv_h \in \vQh,
\\
 \Pi_h v &= \sum_{K \in \grid_h} \mathds{1}_K \frac{1}{|K|} \int_K v \dx \quad \mbox{for } v\in L^1(\Td),
 \\
  D_t r_h (t) &= \frac{r_h(t) - r_h(t-\Delta t)}{\Delta t},\quad
\avs{r_h} = \frac{r_h^{\rm in} + r_h^{\rm out} }{2},\ \ \
\jump{r_h }  = r_h^{\rm out} - r_h^{\rm in}
\end{align*}
and $r_h^{\rm out}$ and $r_h^{\rm in}$ are respectively the outward and inward limits with respect to a given normal $\vn$ to an interface of the control volumes.

\section{Figure supplements of $L^2$-errors}
\begin{figure}[htbp]
	\setlength{\abovecaptionskip}{0.cm}
	\setlength{\belowcaptionskip}{-0.cm}
	\centering
	\includegraphics[width=0.49\textwidth]{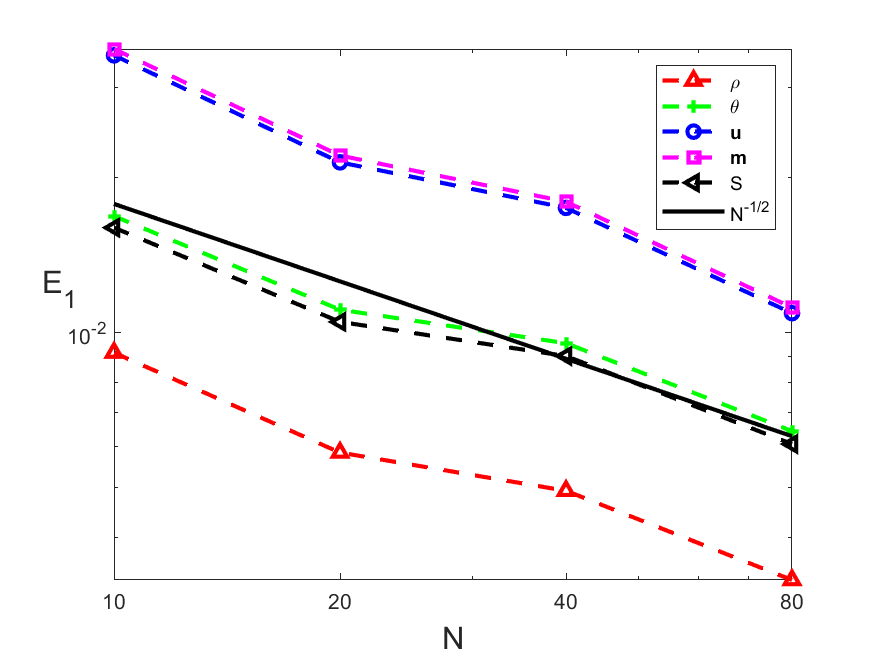}
		\includegraphics[width=0.49\textwidth]{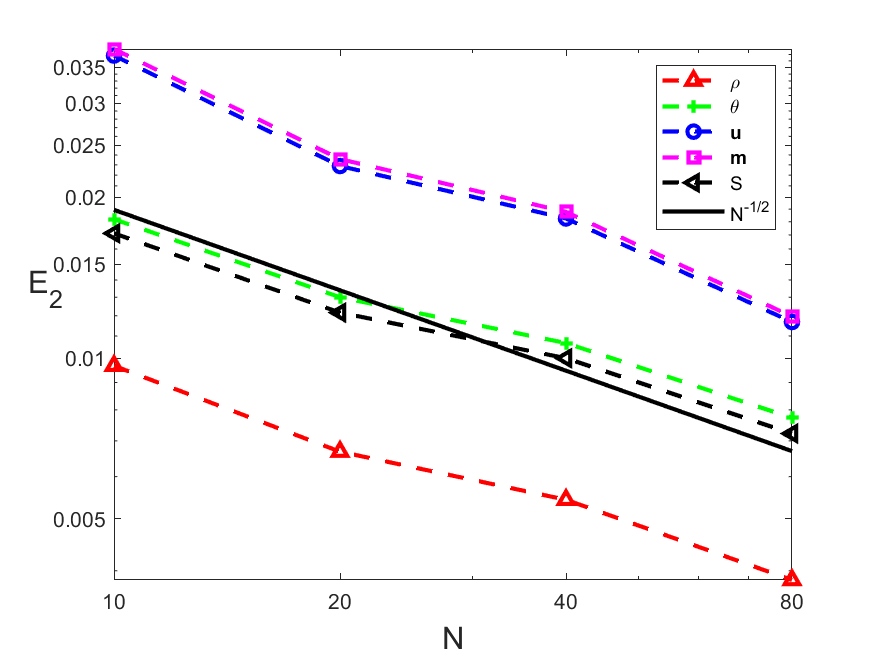}\\
		\includegraphics[width=0.49\textwidth]{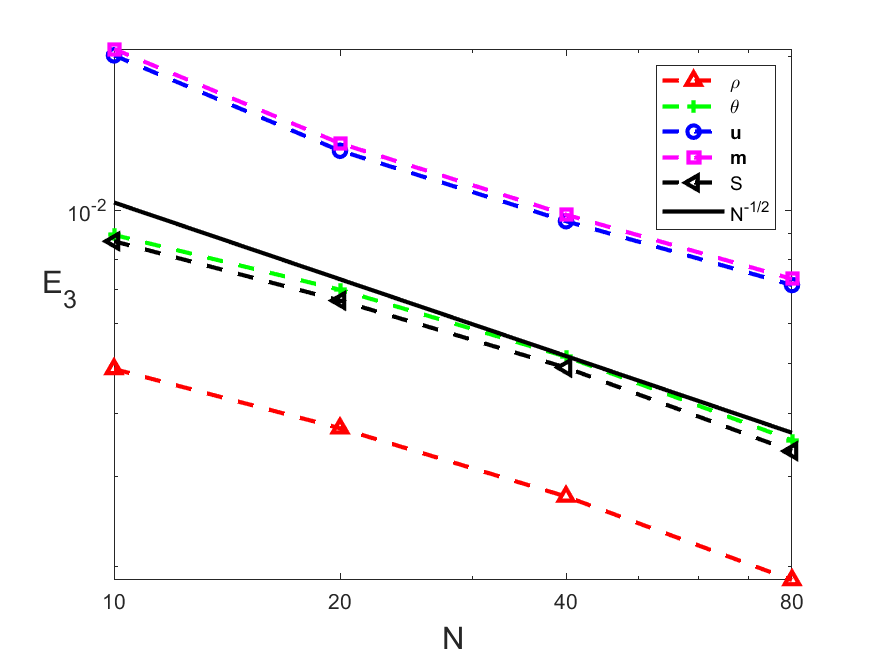}
		\includegraphics[width=0.49\textwidth]{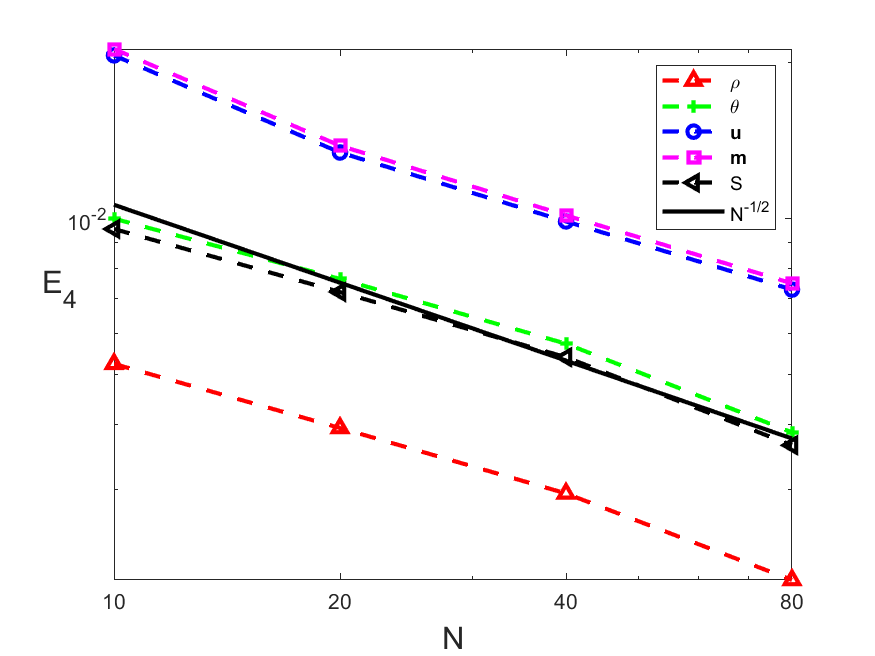}\\	
	\
	\caption{  \small{Vortex Experiment:  {\bf Statistical errors} of the mean ($E_1, E_2$) and the deviation ($E_3, E_4$) in $L^2$-norm obtained with a fine mesh parameter $h_{ref} = 2/512$ and different $N = 10 \cdot 2^n, n = 0, \dots, 3$.
	The black solid lines without any marker denote the reference line of $N^{-1/2}$.  }}\label{fig4}
\end{figure}

\begin{figure}[htbp]
	\setlength{\abovecaptionskip}{0.cm}
	\setlength{\belowcaptionskip}{-0.cm}
	\centering
		\includegraphics[width=0.49\textwidth]{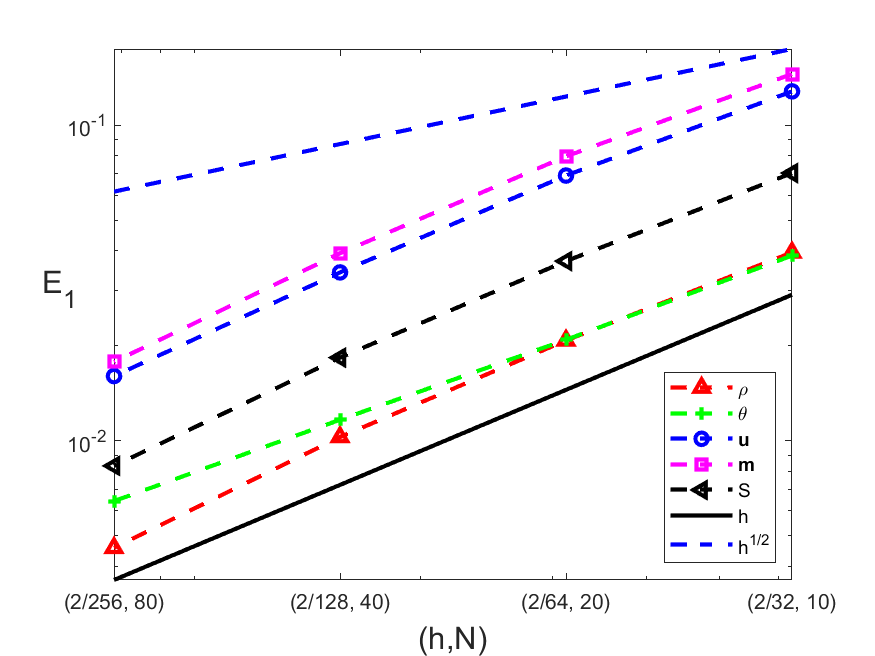}
		\includegraphics[width=0.49\textwidth]{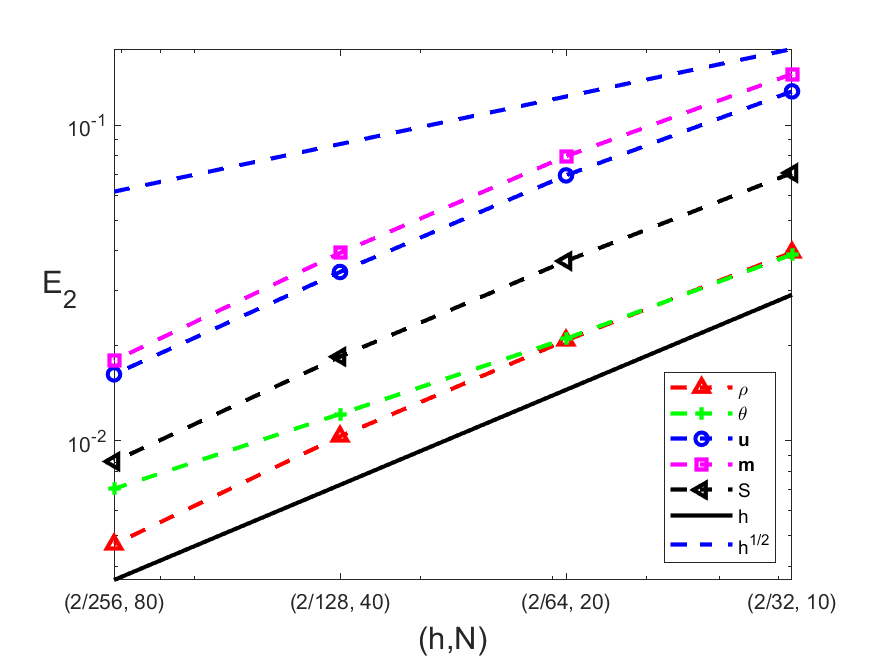}\\
		\includegraphics[width=0.49\textwidth]{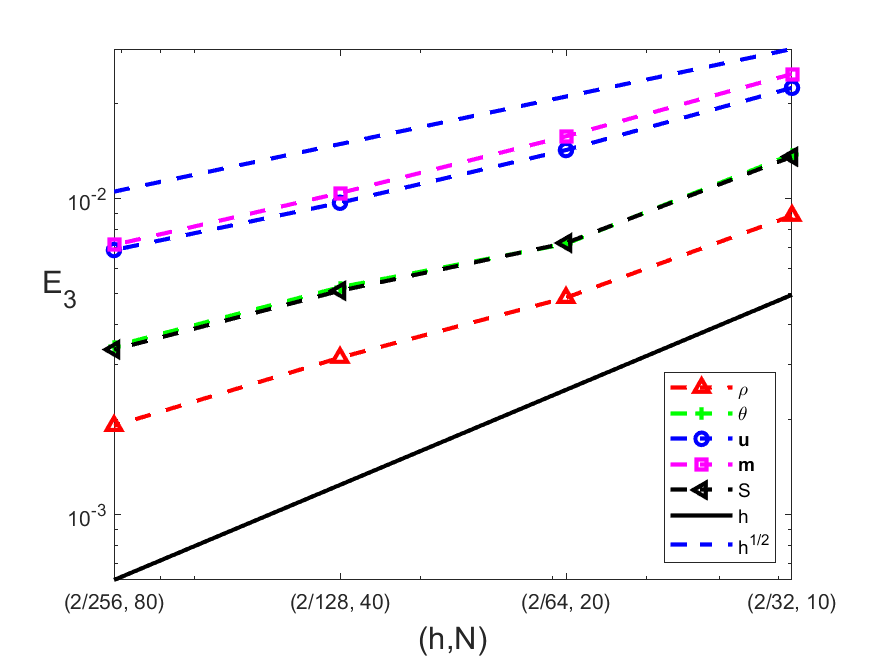}
		\includegraphics[width=0.49\textwidth]{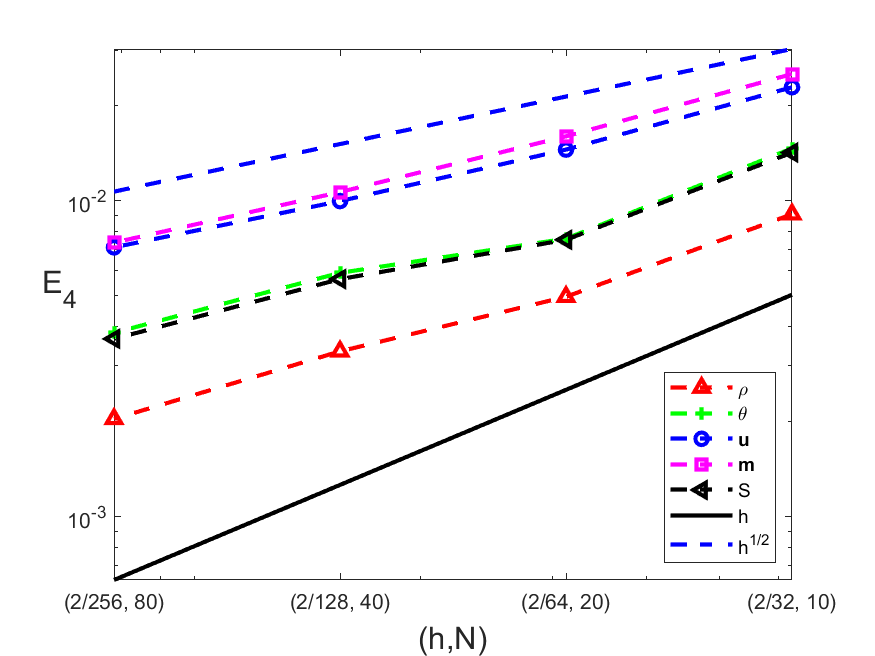}\\
		\
	\caption{  \small{Vortex Experiment:   {\bf Total errors} of the mean ($E_1, E_2$) and the deviation ($E_3, E_4$) in $L^2$-norm with $h = 2/(32 \cdot 2^n), N = 10 \cdot 2^n, n = 3, \dots, 0$.  The blue dashed and black solid lines without any marker denote the reference slope of $h^{1/2}$ and $h$, respectively. }}\label{fig5}
\end{figure}

\end{document}